\newtheorem{theorem}{Theorem}[section]
\newtheorem{corollary}[theorem]{Corollary}
\theoremstyle{definition}
\newtheorem{definition}[theorem]{Definition}
\newtheorem{example}[theorem]{Example}
\theoremstyle{remark}
\newtheorem{remark}[theorem]{Remark}
\numberwithin{equation}{section}
\begin{document}

\title{ESSENTIAL SUBGROUPS AND ESSENTIAL EXTENSIONS}

\author{Sourav Koner}
\address{Department of Mathematics, The University of Burdwan, Burdwan Rajbati, West Bengal 713104}
\email{harakrishnaranusourav@gmail.com}

\author{Biswajit Mitra}
\address{Department of Mathematics, The University of Burdwan, Burdwan Rajbati, West Bengal 713104}
\email{bmitra@math.buruniv.ac.in}

\subjclass[2010]{Primary 20A99}


\begin{abstract}
The notion of essential submodules and essential extensions of modules are extended to groups (typically nonabelian), and several necessary and sufficient conditions for a group to possess a proper essential subgroup are investigated. Further, we have completely characterized groups that do not possess a proper essential extension. These observations are used in concluding several properties of groups having essential subgroups. Finally, a short proof of the well-known theorem of Eilenberg and Moore that the only injective object in the category of groups is the trivial group is given.
\end{abstract}

\maketitle

\section{Introduction}

A submodule $N$ of a module $M$ is called an essential submodule if it intersects every nontrivial submodule of $M$ non-trivially. The theory of essential submodules can be found in \cite{A, C}. Since abelian groups are $\mathbb{Z}$-modules and subgroups of abelian groups are $\mathbb{Z}$-submodules, sufficient works exist in the literature on essential subgroups of abelian groups \cite{B} but hardly found its study over nonabelian groups. Analogous to the theory of essential submodules, the present paper aims to study essential subgroups of an arbitrary group $G$ (abelian or nonabelian) and determine much of the structure of $G$. The present paper consists of three sections. In section \eqref{aa}, we discuss properties of essential subgroups of a group (not necessarily commutative). Under a certain condition, we exhibit an essential subtgroup under a general action (Theorem \eqref{khma}) of groups. As an application of Theorem \eqref{khma}, we see that the symmetric group $\mathrm{Sym}(X)$ on a set $X$ with $|X| > 2$ has a proper essential subgroup. Also, the well-known fact that $\mathrm{Sym}(X)$ is indecomposable follows easily from this theorem. Further, we see how a local study on a normal subgroup of a finite group assures the parent group to contain a proper essential subgroup (Corollary \eqref{babcho}). Finally, we establish a connection between malnormality and essentiality (Corollary \eqref{jbnkbj}). We further prove (Theorem \eqref{sk}) that the direct sum of a family of groups has proper essential subgroups if and only if at least one of them has so. For a group $G$, if $e(G)$ denotes the intersection of proper essential subgroups of $G$ then it has been shown that $e$ distributes over arbitrary direct sums. A complete answer to the question of when the socle of a group $G$ equals $e(G)$ (Theorem \eqref{jj}) is given, Theorem \eqref{kk} provides a complete description of a group that has no proper essential subgroups. Finally, Theorem \eqref{nkb} of this section narrates the behavior of essentiality under the semi-direct product of groups. Section \eqref{mm} is devoted to the essential extension of a group. Theorem \eqref{ma} provides a necessary and sufficient criterion of a group having no proper essential extension (if and only if the group is complete). As a consequence (Corollary \eqref{omnamosibai}), it follows that every non-trivial abelian group has a proper essential extension. Finally, in section \eqref{mbmba} we give a short proof of Eilenberg-Moore's theorem (Corollary \eqref{rmrm}) which follows from the fact that (Theorem \eqref{ll}) the only extensional object in the category of groups is the trivial group. In the present paper, we use the following theorem that can be found in (theorem 2.1 \cite{K}).

\begin{theorem}\label{bchche}
Let $G$ be a group and $\mathrm{Hol}(G)$ be its holomorph. Then $G$ is a direct summand of $\mathrm{Aut}(G)$ if and only if either $G$ is complete or $G \cong \mathbb{Z}_{2} \times C$, where $C$ is some complete group such that $(C: F) \neq 2$ for all subgroups $F$ of $C$.
\end{theorem}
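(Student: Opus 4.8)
The statement concerns when $G$ is a direct factor of its holomorph $\mathrm{Hol}(G)$, and the plan is to work inside $\mathrm{Sym}(G)$, where $\mathrm{Hol}(G) = N_{\mathrm{Sym}(G)}(\lambda(G)) = \lambda(G) \rtimes \mathrm{Aut}(G)$ with $\lambda$ the left regular representation and $\mathrm{Aut}(G)$ the stabilizer of $1$. First I would record the commutation geometry: the right regular representation $\rho(G)$ lies in $\mathrm{Hol}(G)$ and centralizes $\lambda(G)$, and in fact $C_{\mathrm{Hol}(G)}(\lambda(G)) = \rho(G)$, while $\lambda(G) \cap \rho(G) = \{\lambda(z) : z \in Z(G)\} \cong Z(G)$ via $\lambda(z) = \rho(z^{-1})$. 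Since $\mathrm{Aut}(G)$ conjugates $\rho(h)$ to $\rho(\alpha(h))$, the normal subgroups of $\mathrm{Hol}(G)$ that lie in $\rho(G)$ are exactly the $\rho(H)$ with $H$ characteristic in $G$.

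Next I would reduce the direct-factor condition to a statement about $G$ alone. If $\lambda(G)$ is a direct factor with complement $N$, then $N$ is normal and centralizes $\lambda(G)$, so $N \le C_{\mathrm{Hol}(G)}(\lambda(G)) = \rho(G)$ and hence $N = \rho(H)$ for a characteristic $H \le G$. Writing $\rho(h) = \lambda(h^{-1})\,\mathrm{Inn}(h)$ shows that the image of $\rho(H)$ in $\mathrm{Hol}(G)/\lambda(G) \cong \mathrm{Aut}(G)$ is $\{\mathrm{Inn}(h) : h \in H\}$, while $\lambda(G) \cap \rho(H) \cong Z(G) \cap H$. Therefore $\mathrm{Hol}(G) = \lambda(G) \times \rho(H)$ if and only if the conjugation map $c \colon H \to \mathrm{Aut}(G)$, $h \mapsto \mathrm{Inn}(h)$, is an isomorphism. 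Surjectivity of $c$ forces $\mathrm{Aut}(G) = \mathrm{Inn}(G)$, injectivity forces $H \cap Z(G) = 1$, and together they identify $H$ with $G/Z(G)$; hence $G = H \times Z$ with $Z := Z(G)$ abelian and $H \cong G/Z(G)$ centerless.

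The decisive step is then to compute $\mathrm{Aut}(H \times Z)$ under the standing hypotheses that $Z = Z(G)$ is the (characteristic) center and $H$ is centerless. Every automorphism then has the form $(h,z) \mapsto (a(h),\, b(h)\,\mu(z))$ with $a \in \mathrm{Aut}(H)$, $\mu \in \mathrm{Aut}(Z)$ and $b \in \mathrm{Hom}(H,Z)$, while the inner ones are exactly those with $a \in \mathrm{Inn}(H)$, $b$ trivial and $\mu = \mathrm{id}$. Imposing $\mathrm{Aut}(G) = \mathrm{Inn}(G)$ therefore forces $\mathrm{Hom}(H,Z) = 1$, $\mathrm{Aut}(Z) = 1$ and $\mathrm{Aut}(H) = \mathrm{Inn}(H)$. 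Since $Z(H) = 1$, the last equality says $H$ is complete; and an abelian group with trivial automorphism group is $1$ or $\mathbb{Z}_2$, so $Z = 1$ or $Z \cong \mathbb{Z}_2$. If $Z = 1$ then $G = H$ is complete; if $Z \cong \mathbb{Z}_2$ then $G \cong \mathbb{Z}_2 \times H$ with $H$ complete, and the surviving condition $\mathrm{Hom}(H,\mathbb{Z}_2) = 1$ is precisely the requirement that $H$ possess no subgroup of index $2$. I expect this to be the main obstacle: getting the parametrization of $\mathrm{Aut}(H \times Z)$ exactly right so that the only possible deviation from completeness is a single central involution that is moreover \emph{untwisted}, which is what produces the clause $(C : F) \neq 2$.

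Finally I would verify the converse directly, so that the equivalence closes. If $G$ is complete then $Z(G) = 1$ and $c \colon G \to \mathrm{Inn}(G) = \mathrm{Aut}(G)$ is an isomorphism, so $\mathrm{Hol}(G) = \lambda(G) \times \rho(G)$ exhibits $G$ as a direct factor. If $G \cong \mathbb{Z}_2 \times C$ with $C$ complete and having no index-$2$ subgroup, then $\mathrm{Hom}(C,\mathbb{Z}_2) = 1$ and $\mathrm{Aut}(\mathbb{Z}_2) = 1$ make $C$ characteristic, the conjugation map $C \to \mathrm{Aut}(G)$ is an isomorphism onto $\mathrm{Aut}(G) = \mathrm{Inn}(C)$, and $\rho(C)$ is the desired complement. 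Both cases produce the direct factor, completing the argument.
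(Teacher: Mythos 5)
Your argument is correct, but note first that the paper does not prove this statement at all: it is imported verbatim as Theorem~2.1 of Peremans's \emph{Completeness of Holomorphs} (reference [K]), so there is no in-paper proof to compare against. Note also that the statement as printed contains a typo --- ``direct summand of $\mathrm{Aut}(G)$'' should read ``direct summand of $\mathrm{Hol}(G)$,'' which is how the theorem is invoked later (e.g.\ in Theorem \eqref{mgrkhb} and Remark \eqref{rs}) and is the hypothesis you correctly adopted. With that reading, your proof is a complete and self-contained reconstruction of the classical argument: realizing $\mathrm{Hol}(G)$ in $\mathrm{Sym}(G)$, identifying $C_{\mathrm{Hol}(G)}(\lambda(G))=\rho(G)$ so that any direct complement of $\lambda(G)$ must be $\rho(H)$ for $H$ characteristic, reducing the splitting to the conjugation map $H\to\mathrm{Aut}(G)$ being an isomorphism (hence $\mathrm{Aut}(G)=\mathrm{Inn}(G)$, $H\cap Z(G)=1$, $G=H\times Z(G)$ with $H$ centerless), and then using the standard parametrization of $\mathrm{Aut}(H\times Z)$ by triples $(a,b,\mu)\in\mathrm{Aut}(H)\times\mathrm{Hom}(H,Z)\times\mathrm{Aut}(Z)$ to force $H$ complete, $Z\in\{1,\mathbb{Z}_2\}$, and $\mathrm{Hom}(H,\mathbb{Z}_2)=1$, i.e.\ no index-$2$ subgroup --- exactly the clause $(C:F)\neq 2$. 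The converse verification is also sound. Each step I checked (the centralizer computation, $\rho(h)=\lambda(h^{-1})\mathrm{Inn}(h)$, $\lambda(G)\cap\rho(H)\cong Z(G)\cap H$, the automorphism parametrization under $Z(H)=1$) holds, so the proposal would serve as a legitimate proof of the cited result rather than merely a citation.
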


\section{\large{Essential subgroups}}\label{aa}

The following definition of an essential subgroup can be found in \cite{D}.

\begin{definition}
A normal subgroup $E$ of a group $G$ is called an essential subgroup if every nontrivial normal subgroup of $G$ intersects $E$ nontrivially. An essential subgroup $E$ of $G$ is said to be a proper essential subgroup if $E \neq G$. The intersection of all essential subgroups of $G$ is denoted by $e(G)$. We define $e(G) = \{1\}$ if $G = \{1\}$. 
\end{definition}

Every group $G \neq \{1\}$ has essential subgroups since $G$ is essential in $G$. Moreover, if a normal subgroup $E$ is essential in $G$, then $E = E \cap G \neq \{1\}$.

\begin{example}
The center $\mathrm{Z}(G)$ of a nilpotent group $G$ is a proper essential subgroup of $G$ (theorem $5.2.1$ \cite{G}).
\end{example}

\begin{theorem}\label{khma}
Let $H$ be a subgroup of a nontrivial group $G$ and $f: G \times X \rightarrow X$ be an action. If $H_{x} \nsubseteq H_{y}$ for all $x, y \in X$ such that $x \neq y$, then $\mathrm{ncl}_{G}(H)\mathrm{Ker}(f)$ is an essential subgroup of $G$.
\end{theorem}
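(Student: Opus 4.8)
The plan is to verify the definition of essentiality directly: first confirm that $\mathrm{ncl}_{G}(H)\mathrm{Ker}(f)$ is a normal subgroup of $G$, and then show it meets every nontrivial normal subgroup nontrivially. Normality is immediate, since $\mathrm{Ker}(f)$ is the kernel of the permutation representation $G \to \mathrm{Sym}(X)$ afforded by $f$ and hence normal, $\mathrm{ncl}_{G}(H)$ is normal by construction, and the product of two normal subgroups is again a normal subgroup. Writing $E = \mathrm{ncl}_{G}(H)\mathrm{Ker}(f)$, I would fix an arbitrary nontrivial normal subgroup $N$ of $G$; the goal is $N \cap E \neq \{1\}$.

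I would split into two cases according to how $N$ interacts with $\mathrm{Ker}(f)$. If $N \cap \mathrm{Ker}(f) \neq \{1\}$, there is nothing to prove, because $\mathrm{Ker}(f) \subseteq E$ forces $N \cap E \supseteq N \cap \mathrm{Ker}(f) \neq \{1\}$. The substantial case is $N \cap \mathrm{Ker}(f) = \{1\}$, i.e.\ the restriction of the action to $N$ is faithful. Here I would pick any $n \in N \setminus \{1\}$; since $n \notin \mathrm{Ker}(f)$, there is a point $x \in X$ with $n \cdot x \neq x$, and I set $y = n\cdot x$, so that $x \neq y$.

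The heart of the argument is a commutator trick. For any $h \in H$, the commutator $[n,h] = n^{-1}h^{-1}nh$ lies in $N \cap E$: it lies in $N$ because $N$ is normal (so $h^{-1}nh \in N$ and $[n,h] = n^{-1}(h^{-1}nh)$), and it lies in $\mathrm{ncl}_{G}(H) \subseteq E$ because $n^{-1}h^{-1}n$ is a conjugate of $h^{-1} \in \mathrm{ncl}_{G}(H)$ and $h \in \mathrm{ncl}_{G}(H)$, so $[n,h] = (n^{-1}h^{-1}n)h$ is a product of two elements of $\mathrm{ncl}_{G}(H)$. Thus it suffices to produce a single $h \in H$ with $[n,h] \neq 1$, i.e.\ an $h \in H$ that does not commute with $n$. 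I would obtain this by contradiction: if $n$ commuted with every element of $H$, then for each $h \in H_{x}$ one computes $h\cdot y = h\cdot(n\cdot x) = (hn)\cdot x = (nh)\cdot x = n\cdot(h\cdot x) = n\cdot x = y$, giving the inclusion $H_{x} \subseteq H_{y}$ with $x \neq y$, which contradicts the hypothesis. Hence some $h \in H$ fails to commute with $n$, the corresponding $[n,h]$ is a nontrivial element of $N \cap E$, and $E$ is essential.

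The only genuinely delicate point is the case split together with the commutator computation; everything else is bookkeeping. I expect the main obstacle to be recognizing that one should test essentiality via the commutators $[n,h]$ rather than via the elements of $N$ or $H$ individually, since it is precisely the commutator that is simultaneously forced into $N$ (by normality of $N$) and into $\mathrm{ncl}_{G}(H)$ (by normality of the closure), and whose nontriviality is exactly what the stabilizer hypothesis $H_{x} \nsubseteq H_{y}$ guarantees.
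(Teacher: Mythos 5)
Your proof is correct and is essentially the paper's argument run in the contrapositive direction: the paper assumes $N \cap \mathrm{ncl}_{G}(H)\mathrm{Ker}(f) = \{1\}$, deduces $N \subseteq \mathrm{C}_{G}(H)$ by the very commutator observation you make explicit, and then shows $\mathrm{C}_{G}(H) \subseteq \mathrm{Ker}(f)$ because an element centralizing $H$ preserves $\mathrm{Fix}(H_{x}) = \{x\}$ --- which is exactly your computation that an $n$ commuting with $H$ and moving $x$ to $y$ would force $H_{x} \subseteq H_{y}$. Since both proofs rest on the same two facts (trivially intersecting normal subgroups centralize each other, and the stabilizer hypothesis pins down the fixed points), there is nothing substantive to add.
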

\begin{proof}
Let $N$ be a normal subgroup of $G$. If $\mathrm{ncl}_{G}(H)\mathrm{Ker}(f) \cap N = \{1\}$, then $\mathrm{ncl}_{G}(H) \cap N = \{1\}$. This implies that $N \subseteq \mathrm{C}_{G}(H)$. For $\alpha \in X$, we denote $\mathrm{Fix}(H_{\alpha}) = \{x \in X \mid hx = x, \text{ for all } h \in H_{\alpha}\}$. For $g \in \mathrm{C}_{G}(H)$ and $x \in \mathrm{Fix}(H_{\alpha})$, we have $gx = (ghg^{-1})(gx)$, for all $h \in H_{\alpha}$, that is, $h(gx) = gx$, for all $h \in H_{\alpha}$, and so $gx \in \mathrm{Fix}(H_{\alpha})$. But $\mathrm{Fix}(H_{\alpha}) = \{\alpha\}$, since $H_{\alpha} \nsubseteq H_{y}$, for all $y \in X$ such that $\alpha \neq y$. This implies that $g\alpha = \alpha$, for all $g \in \mathrm{C}_{G}(H)$. As $\alpha$ was arbitrary, we see that $\mathrm{C}_{G}(H) \subseteq \mathrm{Ker}(f)$, and this implies that $N = \{1\}$, since $N \subseteq \mathrm{Ker}(f)$.
\end{proof}

\begin{corollary}
Let $\mathrm{Sym}(X)$ be the symmetric group on a set $X$ such that $|X| > 2$. Then $\mathrm{Sym}(X)$ is indecomposable and $\mathrm{Sym}(X)$ contains a proper essential subgroup.
\end{corollary}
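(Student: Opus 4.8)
The plan is to apply Theorem~\ref{khma} to the natural action $f\colon \mathrm{Sym}(X)\times X\to X$. Writing $G=\mathrm{Sym}(X)$, this action is faithful, so $\mathrm{Ker}(f)=\{1\}$. I would take $H=E$, where $E=\mathrm{Alt}(X)$ is the subgroup generated by all $3$-cycles (the finitary alternating group); $E$ is a proper normal subgroup of $G$, so $\mathrm{ncl}_{G}(E)=E$ and the essential subgroup delivered by the theorem is $E\,\mathrm{Ker}(f)=E$ itself.

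The heart of the matter is to check the hypothesis $H_{x}\nsubseteq H_{y}$ for all $x\neq y$, where $H_{x}=\{h\in E: h(x)=x\}$. Given distinct $x,y\in X$ with $|X|\geq 4$, I would pick distinct $z,w\in X\setminus\{x,y\}$ and set $h=(y\,z\,w)$: this is an even permutation fixing $x$ and moving $y$, so $h\in H_{x}\setminus H_{y}$ and hence $H_{x}\nsubseteq H_{y}$. Theorem~\ref{khma} then yields that $E$ is essential, and since $E\neq G$ it is a proper essential subgroup. The value $|X|=3$ is exceptional: every nonidentity element of $A_{3}$ is a $3$-cycle moving all three points, so each $H_{x}$ is trivial and the hypothesis fails. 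In that case I would simply observe that the only nontrivial proper normal subgroup of $S_{3}$ is $A_{3}$, which meets itself nontrivially, so $A_{3}$ is directly a proper essential subgroup.

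For indecomposability, suppose toward a contradiction that $G=M\times N$ with $M,N\neq\{1\}$; then $M$ and $N$ are nontrivial normal subgroups with $M\cap N=\{1\}$. Since $E=\mathrm{Alt}(X)$ is essential, $E\cap M$ and $E\cap N$ are nontrivial, and both are normal in $E$. Let $S$ be the smallest nontrivial normal subgroup of $E$, namely $E$ itself when $E$ is simple (the cases $|X|=3$ and $|X|\geq 5$, including $X$ infinite) and the Klein four-group when $|X|=4$; in every case $S$ is contained in each nontrivial normal subgroup of $E$. Then $S\subseteq (E\cap M)\cap(E\cap N)\subseteq M\cap N=\{1\}$, contradicting $S\neq\{1\}$. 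Hence $G$ is indecomposable. (For $|X|\geq 5$ one may instead note $E\subseteq M$ by simplicity and then invoke the inclusion $\mathrm{C}_{G}(H)\subseteq\mathrm{Ker}(f)$ isolated inside the proof of Theorem~\ref{khma}, which gives $N\subseteq \mathrm{C}_{G}(E)=\{1\}$.)

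I expect the verification of $H_{x}\nsubseteq H_{y}$ to be the main obstacle: the condition forces $H$ to displace points freely, so the convenient choice $H=\mathrm{Alt}(X)$ works only once $|X|\geq 4$, whereas the extreme choices (a point stabiliser, or $H=G$) have normal closure equal to $G$ and produce only the trivial essential subgroup $G$. Consequently the genuinely small configurations, namely $|X|=3$ for the essential subgroup and $|X|=4$ for indecomposability (where $\mathrm{Alt}(X)$ fails to be simple), have to be separated out and handled directly.
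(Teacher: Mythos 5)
Your proposal is correct and follows essentially the same route as the paper: apply Theorem \eqref{khma} to the natural action of $\mathrm{Sym}(X)$ on $X$ with $H$ the finitary alternating group, whose normal closure is itself and which is proper, and then use minimality/simplicity of its normal subgroups to get indecomposability. You supply details the paper leaves implicit --- the explicit check that $H_{x}\nsubseteq H_{y}$ for $|X|\geq 4$, the observation that this hypothesis genuinely fails at $|X|=3$ (forcing a direct argument there), and a uniform indecomposability argument that also covers $|X|=4$ via the Klein four-group --- whereas the paper dismisses $|X|=3,4$ as obvious and invokes simplicity of $H$ only for $|X|>4$.
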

\begin{proof}
The statement is obvious when $|X|$ equals $3$ and $4$. Hence, assume that $|X| > 4$. Let $G = \mathrm{Sym}(X)$ and $H$ be the subgroup consisting of $\sigma \in G$ such that $\sigma$ acts as an even permutation on the finite set of points it moves. It is easy to see that $H$ is a normal subgroup of $G$, and so $\mathrm{ncl}_{G}(H) = H$. Let $f: \mathrm{Sym}(X) \times X \rightarrow X$ be the obvious action. It is easy to see that $\mathrm{Ker}(f) = \{1\}$ and $H_{x} \nsubseteq H_{y}$ whenever $x \neq y$. Thus, $\mathrm{ncl}_{G}(H)\mathrm{Ker}(f) = H$ is an essential subgroup of $\mathrm{Sym}(X)$, and it is proper since $H \neq \mathrm{Sym}(X)$. Moreover, it is well-known that $H$ is simple. Hence, the proof of Theorem \eqref{khma} shows that $H \subseteq N$ for all nontrivial normal subgroup $N$ of $\mathrm{Sym}(X)$, and this implies $\mathrm{Sym}(X)$ is indecomposable.
\end{proof}

\begin{corollary}\label{babcho}
If $S$ is a self-normalizing subgroup of a nontrivial finite group $G$, then either $\mathrm{ncl}_{G}(S) = G$ or $\mathrm{ncl}_{G}(S)$ is a proper essential subgroup of $G$. 
\end{corollary}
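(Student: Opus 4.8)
The plan is to work directly with $K := \mathrm{ncl}_{G}(S)$ and show that once it is proper it is forced to be essential; the two alternatives in the statement then correspond simply to whether or not $K = G$. First I would record that $S$ itself is nontrivial: if $S = \{1\}$ then $\mathrm{N}_{G}(S) = G$, which would force $G = \{1\}$ against the hypothesis. Hence $K \supseteq S \neq \{1\}$ is a nontrivial normal subgroup of $G$ (normal because a normal closure always is). If $K = G$ we are in the first alternative and there is nothing more to prove, so from now on I assume $K \neq G$ and aim to verify the defining property of an essential subgroup.

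The heart of the argument is the following chain. Let $N$ be any nontrivial normal subgroup of $G$ and suppose, for contradiction, that $N \cap K = \{1\}$. Since $N$ and $K$ are both normal, the standard commutator lemma gives $[N, K] \subseteq N \cap K = \{1\}$, so every element of $N$ centralizes $K$; in particular, as $S \subseteq K$, we get $N \subseteq \mathrm{C}_{G}(S)$. Now I invoke the always-valid inclusion $\mathrm{C}_{G}(S) \subseteq \mathrm{N}_{G}(S)$ together with the self-normalizing hypothesis $\mathrm{N}_{G}(S) = S$ to conclude $N \subseteq \mathrm{C}_{G}(S) \subseteq S \subseteq K$. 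But then $N = N \cap K = \{1\}$, contradicting the choice of $N$. Therefore $N \cap K \neq \{1\}$ for every nontrivial normal $N$, which is exactly the statement that $K$ is essential; being different from $G$, it is a proper essential subgroup.

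The step I expect to carry the whole proof is the collapse $N \subseteq \mathrm{C}_{G}(S) \subseteq \mathrm{N}_{G}(S) = S$: the self-normalizing hypothesis is precisely what forces the centralizer of $S$ back inside $S$, and hence inside $K$, so that a normal subgroup meeting $K$ trivially has nowhere to live. There is no genuine obstacle beyond assembling the two standard facts (normal subgroups with trivial intersection commute, and centralizer $\subseteq$ normalizer); in particular the finiteness of $G$ is never actually used. I would also remark why a more mechanical route through Theorem \eqref{khma} is unattractive here: taking the conjugation action of $G$ on the set of conjugates of $S$ makes $\mathrm{Ker}(f)$ the core of $S$, which sits inside $K$, but the hypothesis $H_{x} \nsubseteq H_{y}$ can fail because the $S$-stabilizers $S \cap aSa^{-1}$ of two distinct conjugates may coincide (for instance, the three Sylow $2$-subgroups of $\mathrm{Sym}(4)$ all share the same normal Klein four-group, so intersecting a fixed one with either of the other two yields the same group), so the direct argument above is the cleaner path.
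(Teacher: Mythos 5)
Your proof is correct, but it takes a genuinely different route from the paper's. The paper obtains the corollary as an application of Theorem \eqref{khma}, using the left-multiplication action of $G$ on $G/S$ with $H=\mathrm{ncl}_{G}(S)$: there the stabilizer $H_{xS}$ is the full conjugate $xSx^{-1}$, and finiteness is used exactly once, to upgrade an inclusion $xSx^{-1}\subseteq ySy^{-1}$ to an equality so that $y^{-1}x\in\mathrm{N}_{G}(S)=S$ and the hypothesis of the theorem is verified. Your argument is instead self-contained: the first half (trivial intersection of the normal subgroups $N$ and $K$ forces $[N,K]=\{1\}$, hence $N\subseteq\mathrm{C}_{G}(S)$) is the same mechanism that opens the proof of Theorem \eqref{khma}, but you then close the loop with the chain $\mathrm{C}_{G}(S)\subseteq\mathrm{N}_{G}(S)=S\subseteq K$ rather than with a fixed-point argument, and this collapses $N=N\cap K=\{1\}$ immediately. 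What your version buys is that finiteness becomes superfluous, so you have in fact proved the statement for an arbitrary nontrivial group with a self-normalizing subgroup; what the paper's version buys is uniformity, since Corollaries \eqref{babcho} and \eqref{jbnkbj} become two instances of the same stabilizer criterion. One small correction to your closing remark: the ``mechanical route'' you dismiss is not the one the paper takes. You consider the conjugation action on the set of conjugates of $S$, where the stabilizers are the intersections $S\cap aSa^{-1}$ and can indeed coincide (your $\mathrm{Sym}(4)$ example); the paper instead uses the coset action with $H$ equal to the normal closure, where the stabilizers are the full conjugates $xSx^{-1}$, and for finite $G$ the hypothesis of Theorem \eqref{khma} does hold. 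So that route is viable, though your direct argument is cleaner and strictly more general.
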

\begin{proof}
Let $\mathrm{ncl}_{G}(S) = H$ and assume that $H \neq G$. Consider the obvious action of $G$ on $G/S$. If $H_{xS} \subseteq H_{yS}$, then $xSx^{-1} = ySy^{-1}$, and this implies $y^{-1}x \in S$, since $\mathrm{N}_{G}(S) = S$. This shows that $xS = yS$. Since $H\mathrm{Ker}(f) = H$, Theorem \eqref{khma} implies that $H$ is a proper essential subgroup of $G$.  
\end{proof}

\begin{corollary}\label{jbnkbj}
If $S$ is a nontrivial malnormal subgroup of a group $G$, then either  $\mathrm{ncl}_{G}(S) = G$ or $\mathrm{ncl}_{G}(S)$ is a proper essential subgroup of $G$. 
\end{corollary}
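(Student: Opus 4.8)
The plan is to imitate the proof of Corollary \eqref{babcho}, replacing the self-normalizing-plus-finiteness hypothesis by malnormality. Write $H = \mathrm{ncl}_{G}(S)$ and assume $H \neq G$; since $S \neq \{1\}$, the group $G$ is nontrivial, so Theorem \eqref{khma} is available. Let $f : G \times G/S \rightarrow G/S$ be the natural action of $G$ on the left coset space by left translation. First I would record two structural facts. Because $H$ is normal and contains $S$, every conjugate $xSx^{-1}$ lies in $H$, so the stabilizer of the coset $xS$ inside $H$ is exactly $H_{xS} = H \cap xSx^{-1} = xSx^{-1}$. Second, $\mathrm{Ker}(f) = \bigcap_{x \in G} xSx^{-1} \subseteq S \subseteq H$, whence $H\,\mathrm{Ker}(f) = H$.

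The substance of the argument is verifying the hypothesis of Theorem \eqref{khma}, namely that $H_{xS} \nsubseteq H_{yS}$ whenever $xS \neq yS$, which I would establish by contraposition. Suppose $H_{xS} \subseteq H_{yS}$, that is, $xSx^{-1} \subseteq ySy^{-1}$. Setting $g = y^{-1}x$ and conjugating both sides by $y^{-1}$ on the outside, this rearranges to $gSg^{-1} \subseteq S$. The goal is to deduce $g \in S$, which is equivalent to $xS = yS$.

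This last implication is where malnormality does the essential work, and it is the one step that genuinely departs from Corollary \eqref{babcho}. Suppose, for contradiction, that $g \notin S$ (equivalently $g^{-1} \notin S$, since $S$ is a subgroup). By malnormality, $S \cap gSg^{-1} = \{1\}$; but $gSg^{-1} \subseteq S$ forces $S \cap gSg^{-1} = gSg^{-1}$, so $gSg^{-1} = \{1\}$ and hence $S = \{1\}$, contradicting that $S$ is nontrivial. Therefore $g \in S$, giving $xS = yS$ as required. With the hypothesis of Theorem \eqref{khma} confirmed and $H\,\mathrm{Ker}(f) = H$ in hand, that theorem yields that $H = \mathrm{ncl}_{G}(S)$ is an essential subgroup of $G$, and it is proper precisely because we assumed $H \neq G$. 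I do not anticipate a serious obstacle beyond care with the conjugation bookkeeping: the finiteness that Corollary \eqref{babcho} used to upgrade a containment of conjugates to an equality is here supplied, more cleanly and without any cardinality hypothesis, by malnormality.
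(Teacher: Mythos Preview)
Your proof is correct and follows the same route as the paper: act on $G/S$, identify $H_{xS}=xSx^{-1}$, and use malnormality to show that $xSx^{-1}\subseteq ySy^{-1}$ forces $y^{-1}x\in S$, then invoke Theorem~\ref{khma}. You have simply been more explicit than the paper about why $H_{xS}=xSx^{-1}$, why $H\,\mathrm{Ker}(f)=H$, and how nontriviality of $S$ enters the malnormality step.
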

\begin{proof}
Let $\mathrm{ncl}_{G}(S) = H$ and assume that $H \neq G$. Consider the obvious action of $G$ on $G/S$. If $H_{xS} \subseteq H_{yS}$, then $xSx^{-1} \subseteq ySy^{-1}$, and this implies $tSt^{-1} \subseteq S$, where $t = y^{-1}x$. Since $S$ is malnormal, we get that $t \in S$, and so $xS = yS$. Hence, Theorem \eqref{khma} implies that $H$ is a proper essential subgroup of $G$.
\end{proof}

\begin{remark}\label{tmrasmkrunankb}
Let $\Lambda$ be a set with $|\Lambda| \geq 2$. For each $\lambda \in \Lambda$, let $K_{\lambda}$ be a nontrivial group and let $G = \ast_{\lambda} K_{\lambda}$ be the free product of the groups $K_{\lambda}$, for all $\lambda \in \Lambda$. It is easy to verify that $K_{\lambda}$ is a malnormal subgroup of $G$, for all $\lambda \in \Lambda$. Hence, Corollary \eqref{jbnkbj} implies that $\mathrm{ncl}_{G}(K_{\lambda})$ is a proper essential subgroup of $G$, for all $\lambda \in \Lambda$, since $\mathrm{ncl}_{G}(K_{\lambda}) \cap K_{\lambda'} = \{1\}$, for all $\lambda' \in \Lambda$ such that $\lambda' \neq \lambda$. We can now conclude that the cardinality of the set of proper essential subgroups of $G$ is at least $|\Lambda|$, since $\mathrm{ncl}_{G}(K_{\lambda}) \neq \mathrm{ncl}_{G}(K_{\lambda'})$ whenever $\lambda \neq \lambda'$, and that $G$ is a product of these proper essential subgroups. 
\end{remark}

\begin{theorem}\label{sk}
Let $G = \bigoplus_{\lambda \in \Lambda} G_{\lambda}$, where $\{G_{\lambda}\}_{\lambda \in \Lambda}$ be a family of groups indexed by a nonempty set $\Lambda$. Then 

$(a)$ $G$ has a proper essential subgroup if and 
only if at least one of the components

\hspace{0.45cm} of $G$ has a proper essential subgroup.

$(b)$ $e(G) = \bigoplus_{\lambda \in \Lambda} e(G_{\lambda})$.
\end{theorem}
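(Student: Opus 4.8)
The plan is to reduce both parts to a single structural lemma: for normal subgroups $E_{\lambda} \trianglelefteq G_{\lambda}$, the direct sum $\bigoplus_{\lambda} E_{\lambda}$ is essential in $G = \bigoplus_{\lambda} G_{\lambda}$ if and only if each $E_{\lambda}$ is essential in $G_{\lambda}$. Throughout I would use freely that each $G_{\lambda}$ is a normal subgroup of $G$, that elements lying in distinct summands commute, and that $G_{\lambda} \cap \bigoplus_{\mu \neq \lambda} G_{\mu} = \{1\}$. The forward implication of the lemma is the easy one and also yields half of $(b)$: if $N_{\lambda}$ is a nontrivial normal subgroup of $G_{\lambda}$, then $N_{\lambda}$ is normalized by $G_{\lambda}$ and centralized by every other summand, hence $N_{\lambda} \trianglelefteq G$; intersecting with an essential $E$ shows $E \cap G_{\lambda}$ is essential in $G_{\lambda}$. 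In particular $e(G_{\lambda}) \subseteq E \cap G_{\lambda} \subseteq E$ for every essential $E$ of $G$, and since $e(G)$ is a normal subgroup, the join of the independent summands $e(G_{\lambda})$ gives $\bigoplus_{\lambda} e(G_{\lambda}) \subseteq e(G)$.

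The converse of the lemma is the crux, and the main obstacle is that a nontrivial normal subgroup $N$ of $G$ need not meet any single summand nontrivially (diagonal subgroups inside a product of copies of an abelian group already exhibit this). I would handle it by splitting into two cases via a commutator computation. Given $1 \neq n = (n_{\lambda}) \in N$, conjugating $n$ by an element supported in $G_{\lambda_{0}}$ and multiplying by $n^{-1}$ produces an element of $N \cap G_{\lambda_{0}}$ whose $\lambda_{0}$-component is a commutator $[h, n_{\lambda_{0}}]$. Thus if some component $n_{\lambda_{0}}$ is non-central in $G_{\lambda_{0}}$, then $N \cap G_{\lambda_{0}}$ is a nontrivial normal subgroup of $G_{\lambda_{0}}$, and essentiality of $E_{\lambda_{0}}$ produces a nontrivial element of $N \cap E_{\lambda_{0}} \subseteq N \cap \bigoplus_{\lambda} E_{\lambda}$. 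The remaining case is $N \subseteq \mathrm{Z}(G) = \bigoplus_{\lambda} \mathrm{Z}(G_{\lambda})$, where the commutator trick is vacuous; here I would pass to abelian groups, noting that $E_{\lambda} \cap \mathrm{Z}(G_{\lambda})$ is essential in $\mathrm{Z}(G_{\lambda})$ in the usual sense (every nontrivial subgroup of the center is normal in $G_{\lambda}$), and then invoke the abelian fact that a direct sum of essential subgroups is essential. That fact reduces, via finite supports of cyclic subgroups, to the two-summand case, which I would settle by a two-step multiplication: first push the first coordinate into $E_{1} \cap \mathrm{Z}(G_{1})$, then the second into $E_{2} \cap \mathrm{Z}(G_{2})$, using that a subgroup is stable under the resulting integer multiples.

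With the lemma in hand, part $(b)$ is finished by intersecting the family of essential subgroups $E_{\lambda} \oplus \bigoplus_{\mu \neq \lambda} G_{\mu}$, which are essential by the lemma, over all $\lambda$ and all essential $E_{\lambda} \trianglelefteq G_{\lambda}$. A direct componentwise check shows this intersection equals $\bigoplus_{\lambda} e(G_{\lambda})$, giving the reverse inclusion $e(G) \subseteq \bigoplus_{\lambda} e(G_{\lambda})$ and hence equality. Part $(a)$ then follows quickly: if some $G_{\lambda_{0}}$ has a proper essential subgroup $E_{\lambda_{0}}$, then $E_{\lambda_{0}} \oplus \bigoplus_{\mu \neq \lambda_{0}} G_{\mu}$ is a proper essential subgroup of $G$ by the lemma; conversely, if no $G_{\lambda}$ has a proper essential subgroup then $e(G_{\lambda}) = G_{\lambda}$ for every $\lambda$, so $(b)$ forces $e(G) = G$, which means $G$ admits no proper essential subgroup.

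I expect the central case of the converse of the lemma to be the decisive difficulty, since it is exactly the situation in which the naive strategy of landing a nontrivial element inside a single summand breaks down, and the argument must instead borrow the abelian theory of essential submodules.
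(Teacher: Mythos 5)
Your proposal is correct, but it routes everything through a strictly stronger lemma than the paper needs, and in doing so it takes on a difficulty the paper never meets. The auxiliary essential subgroups you actually use in the applications are exactly the paper's, namely $X = E_{\lambda_0} \oplus \bigoplus_{\mu \neq \lambda_0} G_{\mu}$ with only \emph{one} coordinate restricted, and for such an $X$ essentiality follows from an elementary projection argument: if $\pi_{\lambda_0}(N) = \{1\}$ then $N \subseteq X$, and otherwise $\pi_{\lambda_0}(N)$ is a nontrivial normal subgroup of $G_{\lambda_0}$, hence meets $E_{\lambda_0}$ in some $y \neq 1$, and any preimage of $y$ in $N$ already lies in $X$ because $X$ places no constraint on the other coordinates. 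The diagonal obstruction you single out as the decisive difficulty simply does not arise for these one-coordinate subgroups; it arises only for your stronger claim that $\bigoplus_{\lambda} E_{\lambda}$ is essential whenever each $E_{\lambda}$ is. Your resolution of that claim is nonetheless sound: the commutator computation $[h,n]=[h,n_{\lambda_0}]$ lands a nontrivial element in $N \cap G_{\lambda_0}$ whenever some coordinate of some $n \in N$ is non-central, and the remaining case $N \subseteq \mathrm{Z}(G) = \bigoplus_{\lambda}\mathrm{Z}(G_{\lambda})$ is correctly handled by the classical two-step multiplication argument for essential submodules, so you obtain the exact group-theoretic analogue of the fact that a direct sum of essential submodules is essential --- a sharper auxiliary result, bought at the price of extra work. (Incidentally, the paper's own write-up of the projection step asserts that the element supported only at $\lambda_0$ with value $y$ lies in $N$, which need not hold; the correct observation, which your setup gives for free, is that a preimage of $y$ in $N$ lies in $X$.) The rest of your scaffolding --- the forward direction via $E \cap G_{\lambda}$, the derivation of $(b)$ by intersecting the one-coordinate subgroups, and the deduction of $(a)$ from $(b)$ --- matches the paper.
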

\begin{proof}
$(a)$ Let $E$ be a proper essential subgroup of $G$ and $\lambda \in \Lambda$. Observe that $\{1\} \neq E \cap G_{\lambda}$ is a normal subgroup of $G_{\lambda}$. If $N \neq \{1\}$ is a normal subgroup of $G_{\lambda}$, then $N$ is a normal subgroup of $G$ since $G_{\lambda}$ is a direct summand of $G$ and so $N \cap E \neq \{1\}$. Hence $(E \cap G_{\lambda}) \cap N = E \cap N \neq \{1\}$. This shows that $E \cap G_{\lambda}$ is an essential subgroup of $G_{\lambda}$ for all $\lambda \in \Lambda$. If $E \cap G_{\lambda} = G_{\lambda}$ for every $\lambda \in \Lambda$, then $G_{\lambda} \leq E$ for every $\lambda \in \Lambda$ and $E = G$ against the hypothesis. Hence there exists $\lambda_{0} \in \Lambda$ such that $E \cap G_{\lambda_{0}} \neq G_{\lambda_{0}}$ and $G_{\lambda_{0}}$ has an essential proper subgroup.

Conversely suppose that there exists $\lambda_{0} \in \Lambda$ such that $G_{\lambda_{0}}$ has a proper essential subgroup $E_{\lambda_{0}}$. Put $X = \bigoplus_{\lambda \in \Lambda} X_{\lambda}$ with $X_{\lambda} = G_{\lambda}$ for every $\lambda \in \Lambda \setminus \{\lambda_{0}\}$ and $X_{\lambda_{0}} = E_{\lambda_{0}}$. Note that $X$ is a proper subgroup of $G$. We claim that $X$ is an essential subgroup $G$. Let $N \neq \{1\}$ be a normal subgroup of $G$ and consider the projection $\pi_{\lambda_{0}}: G \rightarrow G_{\lambda_{0}}$ on $\lambda_{0}$-th coordinate; if $\pi_{\lambda_{0}}(N) = \{1\}$, then $N \subseteq \bigoplus_{\lambda} Y_{\lambda}$ with $Y_{\lambda} = X_{\lambda}$ if $\lambda \neq \lambda_{0}$ and $Y_{\lambda_{0}} = \{1\}$, then $N \cap X = N \neq \{1\}$ since $N \subseteq X$.

Suppose $\pi_{\lambda_{0}}(N) \neq \{1\}$, then $\pi_{\lambda_{0}}(N)$ is a nontrivial normal subgroup of $G_{\lambda_{0}}$ and $\pi_{\lambda_{0}}(N) \cap E_{\lambda_{0}} = \pi_{\lambda_{0}}(N) \cap X_{\lambda_{0}} \neq \{1\}$. If $1 \neq y \in \pi_{\lambda_{0}}(N) \cap X_{\lambda_{0}}$ and $n = (x_{\lambda})_{\lambda \in \Lambda}$ with $x_{\lambda} = 1$ for every $\lambda \in \Lambda \setminus \{\lambda_{0}\}$ and $x_{\lambda_{0}} = y$, then $1 \neq n \in N \cap X$ and hence $N \cap X \neq \{1\}$.

\vspace{0.1cm}

$(b)$ Put $\overline{G} := \bigoplus_{\lambda \in \Lambda} e(G_{\lambda})$ and notice that if $E$ is an essential subgroup $G$, then $E \cap G_{\lambda}$ is an essential subgroup of $G_{\lambda}$ for all $\lambda \in \Lambda$. Then $e(G_{\lambda}) \subseteq E \cap G_{\lambda} \subseteq E$ for every $\lambda \in \Lambda$, and so $\overline{G} \subseteq E$ for every essential subgroup $E$ of $G$. Hence $\overline{G} \subseteq e(G)$.

To prove the opposite inclusion, notice that if $G_{\lambda}$ has no proper essential subgroups, that is, $e(G_{\lambda}) = G_{\lambda}$, for every $\lambda \in \Lambda$, then from $(a)$ there follows that $G$ has no proper essential subgroups, hence $e(G) = G = \bigoplus_{\lambda \in \Lambda} e(G_{\lambda}) = \overline{G}$. Therefore we may suppose that $T := \{\lambda \in \Lambda \mid e(G_{\lambda}) \neq G_{\lambda}\} \neq \emptyset$. Let $(g_{\lambda})_{\lambda \in \Lambda} \in e(G)$ and $\tau \in T$; if $E_{\tau}$ is an essential subgroup of $G_{\tau}$ then we have that $X = \bigoplus_{\lambda \in \Lambda} X_{\lambda}$, with $X_{\lambda} = G_{\lambda}$ if $\lambda \neq \tau$ and $X_{\tau} = E_{\tau}$, is an essential subgroup of $G$: let $N$ be any nontrivial normal subgroup of $G$; then $\pi_{\lambda}(N) \unlhd G_{\lambda}$ for every $\lambda \in \Lambda$ where $\pi_{\lambda}: G \rightarrow G_{\lambda}$ is the projection on $\lambda$-th coordinate, and, since $N \neq \{1\}$, there exists $\mu \in \Lambda$ such that $\pi_{\mu}(N) \neq \{1\}$; if $\mu \neq \tau$ then it is clear that $X \cap N \neq \{1\}$; suppose $\mu = \tau$; since $E_{\mu}$ is an essential subgroup of $G_{\mu}$, $\pi_{\mu}(N) \cap E_{\mu} \neq \{1\}$ and so $X \cap N \neq \{1\}$. This implies that $(g_{\lambda})_{\lambda \in \Lambda} \in X$ and, in particular $g_{\tau} \in E_{\tau}$ since $e(G) \subseteq X$. So $g_{\tau} \in E_{\tau}$ for every essential subgroup $E_{\tau}$ of $G_{\tau}$ and this implies that $g_{\tau} \in e(G_{\tau})$. Obviously if $\lambda \in \Lambda \setminus T$, then $e(G_{\lambda}) = G_{\lambda}$ and $g_{\lambda} \in e(G_{\lambda})$. Therefore $(g_{\lambda})_{\lambda \in \Lambda} \in \overline{G}$.
\end{proof}

Recall that the socle of a group $G$, denoted by $\mathrm{soc}(G)$, is the subgroup of $G$ generated by all minimal normal subgroups of $G$. If $G$ has no minimal normal subgroups then its socle is defined to be trivial. Note that in a nontrivial finite group $G$ the socle is an essential subgroup of $G$ (see Remark \eqref{pm} for a more general result). Further, note that $\mathrm{soc}(G) \subseteq e(G)$.

\begin{theorem}\label{jj}
Let $G$ be a nontrivial group. Then $\mathrm{soc}(G) = e(G)$ if and only if $\mathrm{soc}(G)$ is an essential subgroup of $e(G)$.
\end{theorem}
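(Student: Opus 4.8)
The forward implication is immediate: if $\mathrm{soc}(G) = e(G)$, then $\mathrm{soc}(G)$ is essential in $e(G) = \mathrm{soc}(G)$ since any group is essential in itself (for the trivial group this is the convention $e(G) = \{1\}$). So the whole content lies in the reverse implication, and since we already know $\mathrm{soc}(G) \subseteq e(G)$, it suffices to prove the opposite inclusion under the assumption that $S := \mathrm{soc}(G)$ is essential in $N := e(G)$.

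The plan is to manufacture one explicit essential subgroup of $G$ that is small enough to trap $e(G)$, exploiting that $e(G)$ is the intersection of \emph{all} essential subgroups. Using Zorn's lemma I would first pick a normal subgroup $B \unlhd G$ maximal with respect to $B \cap S = \{1\}$, i.e. a ``complement'' of the socle among the normal subgroups of $G$; the collection of such $B$ is nonempty (it contains $\{1\}$) and closed under unions of chains, so a maximal one exists.

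The key step, and the main obstacle, is to show that $SB$ (which is normal in $G$ since $S$ and $B$ are) is an essential subgroup of $G$. I would argue by contradiction: if some nontrivial $M \unlhd G$ satisfied $M \cap SB = \{1\}$, then in particular $M \cap B = \{1\}$, so $BM \supsetneq B$; and a short computation shows $BM \cap S = \{1\}$ — indeed, for $s = bm \in S \cap BM$ with $b \in B$, $m \in M$, one has $m = b^{-1}s \in M \cap SB = \{1\}$, whence $s = b \in B \cap S = \{1\}$. This forces $BM$ to be a strictly larger normal subgroup still meeting $S$ trivially, contradicting the maximality of $B$; note that the argument needs no commutativity between $B$ and $M$. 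Hence $SB$ is essential, and therefore $N = e(G) \subseteq SB$.

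To finish, since $S \subseteq N$, Dedekind's modular law yields $N = N \cap SB = S(N \cap B)$. Setting $B' := N \cap B$, we have $B' \unlhd G$, $B' \subseteq N$, and $B' \cap S \subseteq B \cap S = \{1\}$. If $B'$ were nontrivial it would be a nontrivial normal subgroup of $N$ meeting $S$ trivially, contradicting the hypothesis that $S$ is essential in $N$; hence $B' = \{1\}$ and $N = S$, that is, $e(G) = \mathrm{soc}(G)$. It is worth emphasizing that the essentiality hypothesis is invoked exactly once, precisely to kill the complement part $B'$: the complement construction alone delivers only the decomposition $e(G) = \mathrm{soc}(G)\,(e(G) \cap B)$, and it is essentiality of $\mathrm{soc}(G)$ in $e(G)$ that collapses the second factor.
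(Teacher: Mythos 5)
Your proof is correct and follows essentially the same route as the paper's: both construct a normal subgroup maximal with respect to meeting $\mathrm{soc}(G)$ trivially, show that its product with $\mathrm{soc}(G)$ is essential so that $e(G) = \mathrm{soc}(G)\,(e(G) \cap B)$ via the modular law, and then invoke essentiality of $\mathrm{soc}(G)$ in $e(G)$ to collapse the factor $e(G) \cap B$. The only cosmetic difference is that you argue directly and allow $B = \{1\}$, whereas the paper first observes that $\mathrm{soc}(G)$ fails to be essential in $G$ (so its maximal complement is nontrivial) and phrases the conclusion as a contradiction.
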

\begin{proof}
If $\mathrm{soc}(G) = e(G)$, there is nothing to show. Assume now that $\mathrm{soc}(G)$ is a proper essential subgroup of $e(G)$. Since $\mathrm{soc}(G)$ is a proper subgroup of $e(G)$, $\mathrm{soc}(G)$ is not an essential subgroup of $G$, and so there exists a nontrivial proper normal subgroup $N$ of $G$ such that $N$ is maximal with respect to the property that $\mathrm{soc}(G) \cap N = \{1\}$. Observe that $E = \mathrm{soc}(G)N$ is an essential subgroup of $G$, otherwise, a nontrivial proper normal subgroup $K$ of $G$ would exist such that $E \cap K = \{1\}$ and it would then contradict the maximality of $N$. This implies that $e(G) = \mathrm{soc}(G)(e(G) \cap N)$. Since $e(G) \cap N \unlhd e(G)$ and $\mathrm{soc}(G)$ is an essential subgroup of $e(G)$, we get that $e(G) \cap N = \{1\}$, a contradiction. Hence, $\mathrm{soc}(G) = e(G)$.
\end{proof}

Recall that the length of a group $G$, denoted by $l(G)$, is defined to be the length of a Jordan-H\"{o}lder filtration of $G$. We write $l(G) < \infty$ if $G$ possesses a composition series \cite{H}. It is not hard to see that for a nontrivial group $G$, if $l(G) < \infty$ or if $G$ has a chief series, then every nontrivial normal subgroup of $G$ contains a minimal normal subgroup of $G$.

\begin{remark}\label{pm}
For a nontrivial group $G$, if $l(G) < \infty$ or if $G$ has a chief series, then $\mathrm{soc}(G)$ is an essential subgroup of $G$; moreover, $\mathrm{soc}(G) = e(G)$. Because in the proof of Proposition \eqref{jj}, we have already seen that if $\mathrm{soc}(G) \neq e(G)$, then $e(G) = \mathrm{soc}(G)(e(G) \cap N)$ for some $N \unlhd G$ such that $\mathrm{soc}(G) \cap N = \{1\}$. If $e(G) \cap N$ would be a nontrivial normal group of $G$, then a minimal normal subgroup $K$ of $G$ would exist such that $K \subseteq e(G) \cap N$. But this is absurd since $K \subseteq \mathrm{soc}(G) \cap (e(G) \cap N) = \mathrm{soc}(G) \cap N = \{1\}$. Therefore, it must be that $e(G) \cap N$ is the trivial group.  
\end{remark}

\begin{theorem}\label{kk}
Let $G$ be a nontrivial group. Then the following are equivalent.

$(a)$ $G$ contains no proper essential subgroup.

$(b)$ Every normal subgroup $N$ of $G$ is a direct summand of $G$.

$(c)$ Every nontrivial normal subgroup of $G$ contains no proper essential subgroup.

$(d)$ There exists a group $\overline{G}$ that does not have a proper essential subgroup and

\hspace{0.45cm} contains a normal subgroup isomorphic to $G$.

$(e)$ For every proper normal subgroup $N$ of $G$ and for every normal subgroup $A$ 

\hspace{0.45cm} of $G$ containing $N$, there exists a normal subgroup $B$ of $G$ containing $N$ such

\hspace{0.45cm} that $G = AB$ and $A \cap B = N$. 
\end{theorem}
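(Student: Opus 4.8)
The plan is to make condition $(b)$ the hub of the equivalence, since it is the most structurally concrete, and to route the other four conditions through it. First I would prove $(a) \Leftrightarrow (b)$ directly. For $(a) \Rightarrow (b)$, given $N \unlhd G$ I would invoke Zorn's lemma to pick a normal subgroup $M$ maximal with respect to $N \cap M = \{1\}$ (the family of such $M$ contains $\{1\}$ and is closed under unions of chains, so Zorn applies). Then $NM$ is essential in $G$: if some nontrivial $K \unlhd G$ satisfied $NM \cap K = \{1\}$, a short computation using that $M$ and $K$ are both normal shows $N \cap MK = \{1\}$ while $MK \supsetneq M$, contradicting maximality. By $(a)$ we must have $NM = G$, so $M$ is a complement of $N$. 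The converse $(b) \Rightarrow (a)$ is immediate: a complement of a proper essential subgroup would be a nontrivial normal subgroup meeting it trivially.

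Next I would establish $(b) \Leftrightarrow (e)$ by reading $(e)$ as the assertion that every quotient $G/N$ over a proper normal subgroup $N$ again satisfies $(b)$, using the correspondence between normal subgroups of $G/N$ and normal subgroups of $G$ containing $N$ (here $A \cap B = N$ and $AB = G$ translate into trivial intersection and full product in $G/N$). Then $(e) \Rightarrow (b)$ follows by taking $N = \{1\}$, while $(b) \Rightarrow (e)$ amounts to checking that $(b)$ is inherited by quotients: given $A \supseteq N$ with complement $C$, so $G = A \times C$, the subgroup $B = NC$ is normal, contains $N$, satisfies $AB = (AN)C = AC = G$, and by Dedekind's modular law (legitimate since $N \subseteq A$) one gets $A \cap B = A \cap NC = N(A \cap C) = N$.

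The remaining implications split into the trivial ones, $(c) \Rightarrow (a)$ (apply $(c)$ with $N = G$) and $(a) \Rightarrow (d)$ (take $\overline{G} = G$), and the genuine core, $(a) \Rightarrow (c)$, which says that the absence of a proper essential subgroup descends to every nontrivial normal subgroup $N \unlhd G$. Using $(b)$ I would write $G = N \times C$. The decisive point — and the step I expect to be the main obstacle — is the observation that any $E \unlhd N$ is in fact normal in all of $G$: the factor $C$ centralizes $N$ elementwise while $N$ normalizes $E$, so $G = NC$ normalizes $E$. Granting this, $(b)$ provides a normal complement $G = E \times D$, and Dedekind's law gives $N = E \times (N \cap D)$ with $N \cap D \unlhd N$ and $E \cap (N \cap D) = \{1\}$. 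Were $E$ a proper essential subgroup of $N$, then $N \cap D$ would be a nontrivial normal subgroup of $N$ meeting $E$ trivially, which is impossible; hence $N \cap D = \{1\}$ and $E = N$. Finally $(d) \Rightarrow (a)$ comes for free by applying the already-proved implication $(a) \Rightarrow (c)$ to $\overline{G}$: the normal subgroup of $\overline{G}$ isomorphic to $G$ is nontrivial and therefore inherits the absence of proper essential subgroups. Together these give $(a) \Leftrightarrow (b) \Leftrightarrow (e)$, $(a) \Leftrightarrow (c)$, and $(a) \Leftrightarrow (d)$, closing the equivalence.
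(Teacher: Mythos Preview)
Your proof is correct and broadly follows the same maximal-complement strategy as the paper for the pivotal equivalence $(a)\Leftrightarrow(b)$, but you diverge from the paper in two of the remaining implications. For $(a)\Rightarrow(c)$ and $(d)\Rightarrow(a)$ the paper appeals to its earlier Theorem~\ref{sk}(a) (a direct sum has a proper essential subgroup iff some factor does), whereas you argue internally: once $G=N\times C$, any $E\unlhd N$ is centralized by $C$ and hence normal in $G$, so $(b)$ supplies a complement and Dedekind's law finishes. Your argument is thus self-contained and avoids the machinery of Theorem~\ref{sk}. For $(a)\Rightarrow(e)$ the paper first shows by contradiction that $G/N$ inherits property~$(a)$ (lifting a putative essential $E/N$ to an essential $E$ in $G$) and only then applies $(a)\Rightarrow(b)$ to the quotient; you instead go straight from $(b)$ in $G$ to the complement in $G/N$ via $B=NC$ and Dedekind's law, which is shorter and more constructive. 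Both approaches are valid; yours trades the paper's reuse of the essentiality lemma for repeated, explicit use of the modular law, which arguably makes the logical dependencies cleaner.
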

\begin{proof}
$(a) \Rightarrow (b)$. The claim is true if $G$ is a simple group. So we may assume that $G$ has a nontrivial proper normal subgroup $N$. As $N$ is not an essential subgroup of $G$, there exists a nontrivial proper normal subgroup $\overline{N}$ of $G$ such that $\overline{N}$ is maximal with respect to the property that $N \cap \overline{N} = \{1\}$. Observe that $N \overline{N}$ is an essential subgroup of $G$, otherwise, a nontrivial proper normal subgroup $K$ of $G$ would exist such that $N\overline{N} \cap K = \{1\}$ and it would then contradict the maximality of $\overline{N}$ since $N \cap K\overline{N} = \{1\}$. Hence, $G = N \overline{N}$.

The implications that $(b) \Rightarrow (a)$, $(a) \Rightarrow (d)$, and $(c) \Rightarrow (a)$ are easy to verify. So, we prove $(a) \Rightarrow (c)$ and $(d) \Rightarrow (a)$.

$(a) \Rightarrow (c)$. Assume on the contrary that $N$ is a nontrivial normal subgroup of $G$ and $N$ contains a proper essential subgroup. From the implication $(a) \Rightarrow (b)$, we have that $G = NT$ and $N \cap T = \{1\}$, for some $T \unlhd G$. But Theorem \eqref{sk}$(a)$ implies that $G$ has a proper essential subgroup, a contradiction.

$(d) \Rightarrow (a)$. Assume on the contrary that $G$ contains a proper essential subgroup. Since $\overline{G}$ does not have a proper essential subgroup, therefore, from the implication $(a) \Rightarrow (b)$, we see that there exists a normal subgroup $T$ of $\overline{G}$ such that $G \cap T = \{1\}$ and $\overline{G} = GT$. But Theorem \eqref{sk}$(a)$ implies that $\overline{G}$ has a proper essential subgroup, a contradiction.

$(e) \Rightarrow (a)$. If we take $N = \{1\}$, we see that every normal subgroup $N$ of $G$ is a direct summand of $G$.

$(a) \Rightarrow (e)$ Let $N$ be a proper normal subgroup of $G$. We claim that $G/N$ does not have a proper essential subgroup. Because, on the contrary, if we assume that $E/N$ is a proper essential subgroup of $G/N$, then observe that $E$ is a proper normal subgroup of $G$ and $N \subsetneq E$. We claim that $E$ is an essential subgroup of $G$. Let $H$ be a nontrivial normal subgroup of $G$. If $H \subseteq N$, then clearly $E \cap H \neq \{1\}$, so, assume that $H \nsubseteq N$. As $HN/N \unlhd G/N$ and $HN/N \neq \{1\}$, therefore, there exists $hN \in E/N \cap HN/N$ such that $h \in H$ and $h \notin N$. This implies $h = en$, for some $n \in N$ and $e \in E$ such that $e \notin N$, and so $h \in E \cap H$. But this implies $E$ is a proper essential subgroup of $G$, a contradiction. 

As $G/N$ does not contain a proper essential subgroup, applying $(a) \Rightarrow (b)$ to the group $G/N$, we see that $G/N = (A/N)(B/N)$ and $A/N \cap B/N = \{1\}$, for all $A/N \unlhd G/N$ and for some $B/N \unlhd G/N$. But this implies $AB = G$ and $A \cap B \subseteq N$, for all $A \unlhd G$ such that $N \subseteq A$ and for some $B \unlhd G$ such that $N \subseteq B$. Finally, since $N \subseteq A \cap B$, we get that $N \subseteq A \cap B$, so $A \cap B = N$.
\end{proof}

\begin{remark}
In Theorem \eqref{kk} if we assume that $l(G) < \infty$ or $G$ has a chief series, then we have: $(a) \Leftrightarrow (b) \Leftrightarrow (c) \Leftrightarrow (d) \Leftrightarrow (e) \Leftrightarrow \mathrm{soc}(G) = G$. Because assuming that every normal subgroup $N$ of $G$ is a direct summand of $G$ gives $G = \mathrm{soc}(G)N$, for some $N \unlhd G$ such that $\mathrm{soc}(G) \cap N = \{1\}$. If $N$ would be a nontrivial normal subgroup of $G$, then a minimal normal subgroup $T$ of $G$ would exist such that $T \subseteq N$. But this is absurd since $T \subseteq \mathrm{soc}(G) \cap N = \{1\}$. Therefore, it must be that $N$ is the trivial group. Again, if $\mathrm{soc}(G) = G$, then $G = \bigoplus_{\lambda \in \Lambda} M_{\lambda}$, where each $M_{\lambda}$ is a simple group. Now, if $E$ is an essential subgroup of $G$, then $M_{\lambda} \subseteq E$, for all $\lambda \in \Lambda$. But this implies $G \subseteq E$.
\end{remark}

\begin{theorem}\label{nkb}
The semi-direct product $G = N \rtimes H$ has a proper essential subgroup if at least one of the components of $G$ has a proper essential subgroup. If both $N$ and $H$ are abelian groups such that the action of $H$ on $N$ is nontrivial, then $G$ contains a proper essential subgroup.
\end{theorem}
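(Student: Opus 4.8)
The plan is to reduce both assertions to the characterisation of groups with no proper essential subgroup given in Theorem \eqref{kk}, rather than exhibiting essential subgroups of $G$ directly. Throughout I take $N \unlhd G$ to be the normal factor and $H$ a complement, so that $G = NH$, $N \cap H = \{1\}$, and $G/N \cong H$.

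For the first assertion I would argue contrapositively: assuming $G$ has no proper essential subgroup, I show that neither $N$ nor $H$ can have one. Since $N \unlhd G$, the implication $(a) \Rightarrow (c)$ of Theorem \eqref{kk} applies directly and gives that $N$ (if nontrivial) has no proper essential subgroup; the cases $N = \{1\}$ and $N = G$ are trivial. For the factor $H$ I would use $G/N \cong H$ together with the fact, established inside the proof of the implication $(a) \Rightarrow (e)$ of Theorem \eqref{kk}, that whenever $G$ has no proper essential subgroup the quotient $G/M$ has none for every proper normal subgroup $M$; applying this with $M = N$ (when $H \neq \{1\}$) shows that $H$ has no proper essential subgroup. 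The contrapositive of this is precisely the first statement. One may alternatively give a direct witness when $H$ has a proper essential subgroup $E_{H}$: here $NE_{H} = N \rtimes E_{H}$ is normal in $G$ (using $E_{H} \unlhd H$) and proper since $E_{H} \neq H$, and it is essential, because any nontrivial $K \unlhd G$ with $K \cap N = \{1\}$ maps to a nontrivial normal subgroup of $G/N \cong H$, which meets $NE_{H}/N$ by essentiality of $E_{H}$ in $H$.

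For the second assertion I would show that the normal subgroup $N$ itself is not a direct summand of $G$ and then invoke the equivalence $(a) \Leftrightarrow (b)$ of Theorem \eqref{kk}, which immediately yields a proper essential subgroup. Suppose, for contradiction, that $G = N \times T$ for some $T \unlhd G$ with $N \cap T = \{1\}$. Since $N$ and $T$ are both normal, $[N, T] \subseteq N \cap T = \{1\}$, so $T \subseteq \mathrm{C}_{G}(N)$. Writing an arbitrary $h \in H$ as $h = n' t$ with $n' \in N$ and $t \in T$, for every $m \in N$ we obtain $h m h^{-1} = n'(t m t^{-1})(n')^{-1} = n' m (n')^{-1} = m$, where the middle equality uses $t \in \mathrm{C}_{G}(N)$ and the last uses that $N$ is abelian. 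Thus the action of $H$ on $N$ would be trivial, contradicting the hypothesis; note that only the commutativity of $N$ is actually used in this step.

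The genuinely delicate point is the $N$-part of the first assertion: a proper essential subgroup $E_{N} \unlhd N$ need not be normal in $G$, so there is no evident way to promote it to an essential subgroup of $G$ by hand, and its normal core in $G$ can fail to be essential. I expect this to be the main obstacle, and passing to the contrapositive so as to quote $(a) \Rightarrow (c)$ of Theorem \eqref{kk} is exactly what circumvents it. The remaining checks—normality and essentiality of $NE_{H}$, and the commutator bookkeeping in the second assertion—are routine and of the same flavour as the computations already carried out in Theorem \eqref{sk}.
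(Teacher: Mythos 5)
Your proposal is correct, and for the $N$-factor and the second assertion it coincides with the paper's argument: the paper likewise handles a proper essential subgroup of $N$ by invoking Theorem \eqref{kk} (the contrapositive of $(a)\Rightarrow(c)$), and for the abelian case it likewise assumes $G$ has no proper essential subgroup, splits off $N$ as a direct summand $G=N\times T$, and derives a contradiction with the nontriviality of the action. The one place you genuinely diverge is the $H$-factor of the first assertion: the paper exhibits the essential subgroup explicitly, taking $L=N\rtimes E$ for a proper essential subgroup $E$ of $H$ and verifying essentiality by splitting on whether the projection $\pi_{2}(K)$ to $H$ is trivial; your primary route instead passes to $G/N\cong H$ and quotes the lemma embedded in the proof of $(a)\Rightarrow(e)$ of Theorem \eqref{kk} (that quotients of groups without proper essential subgroups have none), which packages the same content more abstractly --- and your ``alternative direct witness'' $NE_{H}$ is exactly the paper's $L$. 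The trade-off is that the paper's version is self-contained and constructive, while yours leans on an intermediate claim buried inside another proof; both are sound. Two small points in your favour: your observation that only the commutativity of $N$ is needed in the second assertion is a mild genuine strengthening of what the paper proves (the paper uses commutativity of both factors to conclude $G$ is abelian and contradict nonabelianness), and your remark about why a proper essential subgroup of $N$ cannot be promoted to $G$ by hand correctly identifies why the detour through Theorem \eqref{kk} is unavoidable there.
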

\begin{proof}
Let $E$ be a proper essential subgroup of $H$, we claim that $L = N \rtimes E$ is an essential subgroup of $G$. Let $K \neq \{1\}$ be a normal subgroup of $G$ and $\pi_{2}: nh \in G \rightarrow h \in H$ be the projection. It is easy to verify that $\pi_{2}(K)$ is normal in $H$. If $\pi_{2}(K) = \{1\}$, then $K \subseteq N$ and $L \cap K = (N \rtimes E) \cap K = K \rtimes E \neq \{1\}$. If $\pi_{2}(K) \neq \{1\}$, then there exists $h \in E \cap \pi_{2}(K) \setminus \{1\}$ because $E$ is an essential subgroup of $H$. Then there exists $b \in N$ such that $bh \in K$, hence $1 \neq bh \in K \cap L$ since $bh \in N \rtimes E$.

If $N$ has a proper essential subgroup, then Theorem \eqref{kk} implies that $G$ has a proper essential subgroup.

Suppose that $H$ and $N$ are abelian and that the action of $H$ on $N$ is nontrivial, that is $G$ is nonabelian. If $G$ does not have a proper essential subgroup then, from Theorem \eqref{kk}, there follows that $N$ is a direct summand of $G$. Hence there exists a normal subgroup $T$ of $G$ such that $G = NT$ and $N \cap T = \{1\}$, for some $T \unlhd G$, and so $G$ is abelian because $T \cong G/N \cong H$ is abelian, a contradiction.
\end{proof}

\begin{remark}
The converse of the first part of Theorem \eqref{nkb} is not true. For example, consider the group $G := \mathbb{Z}_{p} \rtimes \langle \sigma \rangle$, where $p > 2$ is a prime, $\sigma \in \mathrm{Aut}(\mathbb{Z}_{p})$ is of prime order, and $\langle \sigma \rangle$ acts on $\mathbb{Z}_{p}$ in an obvious way. Then we can conclude from the second part of Theorem \eqref{nkb} that $G$ contains a proper essential subgroup whereas both the components of $G$ do not have a proper essential subgroup.
\end{remark}

\section{\large{Essential extensions}}\label{mm}

\begin{definition}
A group $G$ is said to have an essential extension if there exists a monomorphism $\varphi: G \rightarrow \overline{G}$ such that $\varphi(G)$ is an essential subgroup of $\overline{G}$; if $\varphi(G) \neq \overline{G}$, the essential extension is said to be a proper essential extension.
\end{definition}

In the theory of essential submodules, the following equivalent conditions are well-known for an $R$-module $Q$: $Q$ does not have a proper essential extension if and only if, $Q$ is a direct summand of $M$ for every monomorphism $f: Q \rightarrow M$ of $R$-modules if and only if, $Q$ is injective. It is therefore very natural to ask whether we have an analogous equivalence for a group $G$ that does not have a proper essential extension. The following theorem answers this question affirmatively. 

\begin{theorem}\label{ma}
Let $G$ be a nontrivial group. Then the following statements are equivalent.

$(a)$ $G$ does not have a proper essential extension.

$(b)$ If $\varphi: G \rightarrow \overline{G}$ is a monomorphism with $\varphi(G) \unlhd \overline{G}$, then $\varphi(G)$ is a direct

\hspace{0.45cm} summand of $\overline{G}$.

$(c)$ $G$ is a complete group.

$(d)$ If $\varphi: G \rightarrow \overline{G}$ is a monomorphism with $\varphi(G) \unlhd \overline{G}$ and $\varphi(G) \neq \overline{G}$, then there

\hspace{0.45cm} exists a group $H$ and a morphism $g: \overline{G} \rightarrow H$ such that $g$ is not monic and

\hspace{0.45cm} $g \circ \varphi$ is monic.
\end{theorem}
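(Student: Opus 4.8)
The plan is to establish the three equivalences (a)$\Leftrightarrow$(d), (a)$\Leftrightarrow$(b), and (b)$\Leftrightarrow$(c); together these yield all four statements. I would dispatch (a)$\Leftrightarrow$(d) first as a purely formal reformulation. For a homomorphism $g\colon\overline{G}\to H$, non-monicity means $\mathrm{Ker}(g)\neq\{1\}$, while (since $\varphi$ is injective) monicity of $g\circ\varphi$ means $\mathrm{Ker}(g)\cap\varphi(G)=\{1\}$. Thus producing such a $g$ is the same as producing a nontrivial normal subgroup of $\overline{G}$ meeting $\varphi(G)$ trivially, and conversely any such subgroup $N$ furnishes the quotient map $g\colon\overline{G}\to\overline{G}/N$. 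Hence (d) asserts precisely that every proper normal embedding of $G$ fails to be essential, which is exactly (a).

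For (a)$\Rightarrow$(b), given $\varphi$ with $\varphi(G)\unlhd\overline{G}$, I would apply Zorn's Lemma (a chain union of normal subgroups avoiding $\varphi(G)$ still avoids it) to choose $C\unlhd\overline{G}$ maximal with respect to $C\cap\varphi(G)=\{1\}$. The crucial claim is that $x\mapsto\varphi(x)C$ defines an essential extension $G\to\overline{G}/C$: injectivity follows from $\varphi(G)\cap C=\{1\}$, and a nontrivial $M/C\unlhd\overline{G}/C$ meeting the image trivially would produce $M\unlhd\overline{G}$ with $C\subsetneq M$ and $M\cap\varphi(G)=\{1\}$, contradicting maximality of $C$. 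Since $G$ has no proper essential extension, this forces $\overline{G}/C=\varphi(G)C/C$, i.e. $\overline{G}=\varphi(G)\times C$, so $\varphi(G)$ is a direct summand. Conversely (b)$\Rightarrow$(a) is immediate: if $\varphi(G)$ is essential and $\overline{G}=\varphi(G)\times C$, then the normal subgroup $C$ meets $\varphi(G)$ trivially, forcing $C=\{1\}$, so the extension cannot be proper.

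For (c)$\Rightarrow$(b), suppose $G$ is complete and $\varphi(G)\unlhd\overline{G}$. Conjugation by any $k\in\overline{G}$ restricts to an automorphism of $\varphi(G)\cong G$, which is inner; correcting $k$ by the relevant element of $\varphi(G)$ places it in $C_{\overline{G}}(\varphi(G))$, so $\overline{G}=\varphi(G)\,C_{\overline{G}}(\varphi(G))$. As the centralizer of a normal subgroup is normal and $\varphi(G)\cap C_{\overline{G}}(\varphi(G))=\mathrm{Z}(\varphi(G))=\{1\}$ by completeness, this is a direct product, yielding (b).

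The main obstacle is (b)$\Rightarrow$(c), where I would lean on Theorem \eqref{bchche}. Applying (b) to the natural embedding $G\unlhd\mathrm{Hol}(G)$ shows that $G$ is a direct summand of its holomorph, so Theorem \eqref{bchche} leaves two cases: either $G$ is complete, as wanted, or $G\cong\mathbb{Z}_2\times C$ with $C$ complete and $(C:F)\neq 2$ for all $F\le C$. To eliminate the exceptional case I would contradict (b) directly by embedding $\mathbb{Z}_2\times C$ normally into $\mathbb{Z}_4\times C$, identifying $\mathbb{Z}_2$ with the unique order-two subgroup $\langle t^2\rangle$ of $\mathbb{Z}_4=\langle t\rangle$. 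A complement would be a normal copy of $(\mathbb{Z}_4\times C)/(\mathbb{Z}_2\times C)\cong\mathbb{Z}_2$ meeting $\mathbb{Z}_2\times C$ trivially; but any order-two element $(t^i,c)$ satisfies $t^{2i}=1$, forcing $i\in\{0,2\}$ and hence $(t^i,c)\in\mathbb{Z}_2\times C$, so no such complement exists. This contradicts (b), and therefore $G$ must be complete.
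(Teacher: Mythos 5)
Your proof is correct. For $(a)\Leftrightarrow(d)$, $(a)\Leftrightarrow(b)$, and the direction out of completeness you follow essentially the paper's route: the same kernel translation, the same Zorn/maximal-complement argument producing the essential extension $G\rightarrow\overline{G}/C$, and the same centralizer decomposition $\overline{G}=\varphi(G)\,\mathrm{C}_{\overline{G}}(\varphi(G))$ (the paper phrases this as $(c)\Rightarrow(a)$ rather than your $(c)\Rightarrow(b)$, an immaterial difference since both cycles close). The genuine divergence is in $(b)\Rightarrow(c)$. The paper extracts from the splitting of $\mathrm{Hol}(G)$ two intermediate facts --- every automorphism of $G$ is inner (the complement centralizes $G$), and $\mathrm{Z}(G)$, if nontrivial, is divisible (via a central product of $G$ with a suitable abelian overgroup of $\mathrm{Z}(G)$) --- and only then invokes Theorem \eqref{bchche} to force $\mathrm{Z}(G)$ to be trivial or $\mathbb{Z}_{2}$, ruling out $\mathbb{Z}_{2}$ by divisibility. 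You instead apply Theorem \eqref{bchche} at full strength to obtain the dichotomy ``complete or $\mathbb{Z}_{2}\times C$'' and kill the exceptional case by exhibiting the normal embedding $\mathbb{Z}_{2}\times C\hookrightarrow\mathbb{Z}_{4}\times C$ with no normal complement: a complement would be a normal $\mathbb{Z}_{2}$, but every element of order dividing $2$ in $\mathbb{Z}_{4}\times C$ already lies in $\mathbb{Z}_{2}\times C$. This elimination is correct and arguably cleaner --- it avoids the central-product/divisibility detour and does not even use the side condition $(C:F)\neq 2$ from Theorem \eqref{bchche} --- whereas the paper's version yields ``$\mathrm{Aut}(G)=\mathrm{Inn}(G)$'' and ``$\mathrm{Z}(G)$ is divisible'' as byproducts of independent interest. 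Both arguments rest on the same external input, Peremans' theorem.
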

\begin{proof}
$(a) \Rightarrow (b)$. Let $\varphi: G \rightarrow \overline{G}$ be a monomorphism such that $\varphi(G)$ is a normal subgroup of $\overline{G}$. We may suppose $\varphi(G) \neq \overline{G}$, then there exist nontrivial normal subgroups of $\overline{G}$ that intersect $\varphi(G)$ trivially; let $T$ be maximal respect to these conditions. Then we have that $(\varphi(G)T)/T$ is an essential subgroup of $\overline{G}/T$. In fact if $(\varphi(G)T)/T \cap K/T = \{1\}$ for some normal subgroup $K/T$ of $\overline{G}/T$, then $(\varphi(G)T) \cap K = T(\varphi(G) \cap K) \subseteq T$, hence $\varphi(G) \cap K = \{1\}$, and so $T = K$. Therefore $(\varphi(G)T)/T = \overline{G}/T$, because $(\varphi(G)T)/T \cong G$ does not have proper essential subgroups, and $\varphi(G)T = \overline{G}$.

While the proof of the assertion $(b) \Rightarrow (c)$ may be found in \cite{G}, we give an alternate independent proof here to make it more widely available. 

$(b) \Rightarrow (c)$ Embedding $G$ into its holomorph, we see that $\mathrm{Hol}(G) = GT$ and $G \cap T = \{1\}$, for some $T \unlhd \mathrm{Hol}(G)$.
Let $\sigma \in \mathrm{Aut}(G)$ and $\sigma = gt$, for some $g \in G$ and $t \in T$. For any $x \in G$, we have $\sigma(x) = \sigma x \sigma^{-1} = (gt)x(t^{-1}g^{-1}) = gxg^{-1}$, since $T$ centralizes $G$. Hence, every automorphism $\sigma$ of $G$ is inner.

We claim that if $\mathrm{Z}(G) \neq \{1\}$, then $\mathrm{Z}(G)$ is a divisible group. To prove the claim, assume on the contrary that $\mathrm{Z}(G)$ is not a divisible group. Therefore, there exists an abelian group $H$ containing $\mathrm{Z}(G)$ such that $H \neq \mathrm{Z}(G)N$, for all $N \leq H$ such that $\mathrm{Z}(G) \cap N = \{1\}$. If we form the central product $K$ of $G$ and $H$ by amalgamating $\mathrm{Z}(G)$, then we have $G \cap H = \mathrm{Z}(G)$ and $[G, H] = \{1\}$. Since $G \unlhd K$, the hypothesis implies that $K = GT$ and $G \cap T = \{1\}$, for some $T \unlhd K$. Since $H, T \leq \mathrm{C}_{K}(G)$ and since $K = GH = GT$, we get that $\mathrm{C}_{K}(G) = \mathrm{Z}(G)H= \mathrm{Z}(G)T$, and so $T \leq H$. This implies that $H = H \cap GT = \mathrm{Z}(G)T$ and $\mathrm{Z}(G) \cap T = \{1\}$, a contradiction.

Since Theorem \eqref{bchche} implies that either $\mathrm{Z}(G) = \{1\}$ or $\mathrm{Z}(G) \cong \mathbb{Z}_{2}$, and since $\mathrm{Z}(G)$ is a divisible group, therefore, we conclude that $\mathrm{Z}(G) = \{1\}$. 

$(c) \Rightarrow (a)$. Let $\varphi: G \rightarrow \overline{G}$ be a monomorphism such that $\varphi(G)$ is an essential subgroup of $\overline{G}$. In particular, $\varphi(G)$ is a normal subgroup of $\overline{G}$. Since $G$ is complete and $\varphi(G) \cong G$, so $\varphi(G)$ is complete. If $T$ is the centralizer of $\varphi(G)$ in $\overline{G}$, then it is well-known that  $T \unlhd \overline{G}$, $\overline{G} = \varphi(G)T$, and $\varphi(G) \cap T = \{1\}$. But this implies $T = \{1\}$, since $\varphi(G)$ is an essential subgroup of $\overline{G}$, and so $\overline{G} = \varphi(G)$.

$(a) \Rightarrow (d)$. Since $\varphi(G)$ is not an essential subgroup of $\overline{G}$, there exists a nontrivial proper normal subgroup $N$ of $\overline{G}$ such that $\varphi(G) \cap N = \{1\}$. Consider the canonical map $\pi: \overline{G} \rightarrow \overline{G}/N$. As $N \neq \{1\}$, $\pi$ is not a monomorphism. If $\pi(\varphi(x)) = N$, then $\varphi(x) \in N$, and this implies $x = 1$, since $\varphi(G) \cap N = \{1\}$ and $\varphi$ is a monomorphism. Hence, $\pi \circ \varphi$ is a monomorphism. 

$(d) \Rightarrow (a)$ Assume on the contrary that $\varphi: G \rightarrow \overline{G}$ is a proper essential extension of $G$. Then, for some group $H$, there exists a morphism $g: \overline{G} \rightarrow H$ such that $g \circ \varphi$ is a monomorphism and $g$ is not a monomorphism. Hence, $\mathrm{Ker}(g) \neq \{1\}$. As $\varphi(G)$ is an essential subgroup of $\overline{G}$, there exists $y \in \varphi(G) \cap \mathrm{Ker}(g)$ such that $y \neq 1$. Let $\varphi(x) = y$, for some $x \in G$. Since $g(\varphi(x)) = 1$ and $g \circ \varphi$ is a monomorphism, we get that $x = 1$. But this implies $y = 1$, a contradiction.
\end{proof}

Since divisible groups are precisely injective $\mathbb{Z}$-modules, they do not possess a proper essential extension in the category of abelian groups. If $G \neq \{1\}$ is an abelian group, then it is not complete and so applying Theorem \eqref{ma} we obtain Corollary \eqref{omnamosibai}. Thus, groups $\overline{G}$ that contain divisible groups as proper essential subgroups must all be nonabelian groups. 

\begin{corollary}\label{omnamosibai}
Every nontrivial abelian group $G$ has a proper essential extension.
\end{corollary}
\begin{proof}
Follows from Theorem \eqref{ma}.
\end{proof}

Let $G$ be a group and $\mathrm{Hol}(G)$ be its holomorph. We denote $\mathrm{Hol}^{n}(G)$ to mean the group $\mathrm{Hol}(\mathrm{Hol}( \cdots (\mathrm{Hol}(G)) \cdots ))$, where the parentheses occur $n$-times.

\begin{theorem}\label{mgrkhb}
For every group $G$, there exists a group $H$ such that $G$ is embedded in $H$ as a normal subgroup and $\mathrm{Hol}^{n}(H)$ contains a proper essential subgroup, for all $n \in \mathbb{N}$.
\end{theorem}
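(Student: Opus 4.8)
The plan is to reduce the entire statement to a single, self-propagating property: the existence of a proper essential subgroup. First I would observe that it suffices to produce a group $H$ with $G \unlhd H$ such that $H$ alone possesses a proper essential subgroup. The key point is that every holomorph is a semi-direct product $\mathrm{Hol}(K) = K \rtimes \mathrm{Aut}(K)$ in which the previous group $K$ sits as the \emph{normal} factor. Consequently, if $K$ has a proper essential subgroup, then the normal-factor case of Theorem \eqref{nkb} (which itself rests on the equivalence $(a) \Leftrightarrow (c)$ of Theorem \eqref{kk}, applied with $N = K \unlhd \mathrm{Hol}(K)$) yields a proper essential subgroup of $\mathrm{Hol}(K)$. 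Setting $K = \mathrm{Hol}^{n}(H)$ and using $\mathrm{Hol}^{n+1}(H) = \mathrm{Hol}^{n}(H) \rtimes \mathrm{Aut}(\mathrm{Hol}^{n}(H))$ at the inductive step, a routine induction on $n$ then shows that $\mathrm{Hol}^{n}(H)$ has a proper essential subgroup for every $n \in \mathbb{N}$, the base case being furnished by $H$ itself.

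To construct such an $H$, I would take $H = G \times \mathbb{Z}_{4}$. Then $G \cong G \times \{0\}$ embeds in $H$ as a normal direct factor, so the embedding requirement is met. Moreover, $\mathbb{Z}_{4}$ has a proper essential subgroup, namely $\{0,2\}$: its only nontrivial subgroups are $\{0,2\}$ and $\mathbb{Z}_{4}$, and $\{0,2\}$ meets each of them nontrivially. Hence Theorem \eqref{sk}$(a)$, applied to the direct sum $H = G \oplus \mathbb{Z}_{4}$, guarantees that $H$ has a proper essential subgroup because one of its two components does. Any nontrivial abelian group that fails to be a direct sum of simple groups, such as $\mathbb{Z}$ or $\mathbb{Z}_{p^{2}}$, would serve equally well in place of $\mathbb{Z}_{4}$.

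Combining the two steps finishes the argument: the second step furnishes the base of the induction, and the induction of the first step then propagates the existence of a proper essential subgroup through all iterated holomorphs of $H$. I do not anticipate any genuine obstacle here, since the heavy lifting is already done by Theorems \eqref{sk}, \eqref{kk}, and \eqref{nkb}; the only point demanding care is the bookkeeping at the inductive step, namely the verification that $\mathrm{Hol}^{n+1}(H)$ is literally the semi-direct product $\mathrm{Hol}^{n}(H) \rtimes \mathrm{Aut}(\mathrm{Hol}^{n}(H))$ with $\mathrm{Hol}^{n}(H)$ as its normal factor, so that the normal-component case of Theorem \eqref{nkb} applies verbatim at each stage.
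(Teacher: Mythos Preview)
Your argument is correct and in fact cleaner than the paper's. The inductive reduction is the same in spirit---both you and the paper propagate a proper essential subgroup from one holomorph to the next via the implication ``a normal subgroup with a proper essential subgroup forces one in the ambient group'' (Theorem~\ref{kk}$(a)\Leftrightarrow(c)$, packaged in the normal-factor clause of Theorem~\ref{nkb}). Where you diverge is in the construction of $H$. The paper splits into cases: for non-complete $G$ it invokes Theorem~\ref{ma} to obtain a proper essential extension $H\supsetneq G$, while for complete $G$ it takes $H=G\times G$, shows the swap automorphism is outer and $|\mathrm{Z}(H)|\neq 2$, and then appeals to Peremans' result (Theorem~\ref{bchche}) to force a proper essential subgroup somewhere in the holomorph tower. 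Your uniform choice $H=G\times\mathbb{Z}_{4}$ sidesteps both of these: you never touch Theorem~\ref{ma} or Theorem~\ref{bchche}, relying only on the elementary Theorems~\ref{sk}, \ref{kk}, and \ref{nkb}. What the paper's approach buys is that in the non-complete case its $H$ contains $G$ as an \emph{essential} normal subgroup rather than merely a normal one, but the theorem as stated does not ask for this, so your simplification loses nothing.
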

\begin{proof}
The statement is obvious if $G$ is the trivial group. Assume now that $|G| > 1$. If $G$ is a complete group, then $H = G \times G$ is not complete, since the automorphism $\sigma: H \rightarrow H$ given by $(a, b) \mapsto (b, a)$ is not inner, because if $\sigma$ would be inner, then for some $(\alpha, \beta) \in H$, we would have $(\alpha, \beta)(x, 1)(\alpha^{-1}, \beta^{-1}) = (1, x)$, for all $x \in G$, and this would imply $x = 1$, a contradiction, since $G$ is nontrivial. Also, $\mathrm{Z}(H) = \{1\}$, since $G$ is complete, and so $|\mathrm{Z}(H)| \neq 2$. For some $n \in \mathbb{N}$, if $\mathrm{Hol}^{n}(H)$ does not have a proper essential subgroup, then a repeated application of Theorem \eqref{kk} shows that $\mathrm{Hol}(H)$ does not have proper essential subgroups, and so we get from Theorem \eqref{kk} that $H$ is a direct summand of $\mathrm{Hol}(H)$. Now, Theorem \eqref{bchche} implies that either $H$ is complete or $|\mathrm{Z}(H)| = 2$, a contradiction.

Assume now that $G$ is not a complete group. Then Theorem \eqref{ma} implies that $G$ has a proper essential extension, say $H$. Now Theorem \eqref{nkb} implies that $\mathrm{Hol}(H)$ has a proper essential subgroup. Finally, Theorem \eqref{kk} implies that $\mathrm{Hol}^{n}(H)$ has a proper essential subgroup, for all $n \in \mathbb{N}$.
\end{proof}

\begin{remark}\label{rs}
In Theorem \eqref{mgrkhb}, we have used the fact that follows easily from Theorem \eqref{bchche} that for a nontrivial group $G$, if $|\mathrm{Z}(G)| \neq 2$ and if $G$ is not complete, then $\mathrm{Hol}(G)$ has a proper essential subgroup. These conditions can not be dropped, because, if we consider the Mathieu group $M_{11}$, then $\mathrm{Hol}(M_{11}) \cong M_{11} \times M_{11}$. Also, since $\mathrm{Hol}(\mathbb{Z}_{2}) \cong \mathbb{Z}_{2}$, we see that the converse is not true.
\end{remark}

\section{\large{A short proof of Eilenberg and Moore's theorem}}\label{mbmba}

An $R$-module $Q$ is injective if, for any monomorphism $f: L \rightarrow M$, and any morphism $\varphi: L \rightarrow Q$, there is a morphism $h: M \rightarrow Q$ such that $h \circ f = \varphi$. Motivated by this definition, we define extensional objects in the following way. 

\begin{definition}
A group $I$ in the category of groups is called an extensional object if, for any monomorphism $f: N \rightarrow G$ such that $f(N) \unlhd G$, and any morphism $\varphi: N \rightarrow I$, there is a morphism $h: G \rightarrow I$ such that $h \circ f = \varphi$. 
\end{definition}

It is natural to ask whether any extensional object exists in the category of groups. Theorem \eqref{ll} shows that such an extensional object in the category of groups must be the trivial group. On the other hand, recall that an object $\mathcal{I}$ in the category of groups is called injective if, for any monomorphism $K \rightarrow L$, and any morphism $K \rightarrow \mathcal{I}$, there is a morphism $L \rightarrow \mathcal{I}$ such that the following diagram commutes.

\[\begin{tikzcd}
& {\mathcal{I}} \\
K && L
\arrow[from=2-1, to=1-2]
\arrow[from=2-1, to=2-3]
\arrow[dashed, from=2-3, to=1-2]
\end{tikzcd}\]

\vspace{0.3cm}

The only injective object in the category of groups is the trivial group, a well-known result due to Eilenberg and Moore \cite{E}. Observe that an injective object in the category of groups is necessarily an extensional object in the category of groups. Therefore, proving that the only extensional object in the category of groups is the trivial group yields an alternative proof of Eilenberg and Moore's theorem. Another alternative proof of Eilenberg and Moore's theorem can be found here \cite{F}.

\begin{theorem}\label{ll}
The only extensional object in the category of groups is the trivial group.
\end{theorem}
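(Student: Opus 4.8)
The plan is to produce, for any nontrivial candidate $I$, a single normal embedding together with a single target morphism that the extensional property provably cannot fill in, thereby forcing $I = \{1\}$. First I would assume $I$ is an extensional object with $I \neq \{1\}$ and set $G = I \wr \mathbb{Z}_{2} = (I \times I) \rtimes \mathbb{Z}_{2}$, the wreath product in which the generator $s$ of $\mathbb{Z}_{2}$ acts on the base by swapping coordinates, $s\,(x,y)\,s^{-1} = (y,x)$. I would take $N = I \times I$, which is normal in $G$, let $f : N \hookrightarrow G$ be the inclusion (a monomorphism with $f(N) \unlhd G$), and let $\varphi = \pi_{1} : N \to I$, $(x,y) \mapsto x$, be the first projection (a genuine homomorphism).

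Extensionality, applied to this $f$ and $\varphi$, would then supply a morphism $h : G \to I$ with $h \circ f = \varphi$, that is, $h(x,y) = x$ for all $(x,y) \in I \times I$. Writing $c = h(s) \in I$ and using that $h$ is a homomorphism together with $s\,(x,y)\,s^{-1} = (y,x)$, the defining relation would read, for all $x, y \in I$,
\begin{equation*}
y \;=\; h(y,x) \;=\; h\bigl(s\,(x,y)\,s^{-1}\bigr) \;=\; h(s)\,h(x,y)\,h(s)^{-1} \;=\; c\,x\,c^{-1}.
\end{equation*}
Setting $x = 1$ would give $y = c\,1\,c^{-1} = 1$ for every $y \in I$, so $I = \{1\}$, contradicting $I \neq \{1\}$. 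Hence the only extensional object in the category of groups is the trivial group.

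The one genuinely creative step — and the step I expect to be the sole obstacle — is the choice of the test pair $(G, \varphi)$: the condition $h \circ f = \varphi$ forces $h(s)$ to conjugate the first coordinate onto the \emph{second}, which is impossible in a nontrivial group precisely because the second coordinate ranges independently of the first. Everything after that choice is a one-line homomorphism computation, and no prior structural result is needed. As a consistency check against the machinery already developed, one could instead first observe that an extensional $I$ has no proper essential extension: given a proper essential extension $\varphi : I \to \overline{G}$, extensionality applied to $\varphi$ and $\mathrm{id}_{I}$ yields a retraction $h : \overline{G} \to I$ with $\mathrm{Ker}(h) \cap \varphi(I) = \{1\}$, so $\mathrm{Ker}(h) = \{1\}$ by essentiality and $\varphi$ is onto, a contradiction; Theorem \eqref{ma} would then force $I$ to be complete. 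However, the wreath-product argument above is shorter and dispenses with completeness entirely.
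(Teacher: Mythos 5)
Your proof is correct, and it takes a genuinely different route from the paper's main argument. The paper first shows (via the lifting of the identity map, Figure \eqref{pp}) that an extensional object has no proper essential extension, hence is complete by Theorem \eqref{ma}; it then shows $I \times I$ is again extensional, hence complete, and derives a contradiction from the fact that the coordinate swap is an outer automorphism of $I \times I$. Your argument bypasses completeness and Theorem \eqref{ma} entirely: the single test pair $N = I \times I \unlhd (I\times I)\rtimes\mathbb{Z}_2$ with $\varphi = \pi_1$ forces $y = c\,x\,c^{-1}$ for all $x,y \in I$, which kills $I$ in one line. The computation checks out: $f$ is a monomorphism with normal image, $\pi_1$ is a homomorphism, and the relation $s(x,y)s^{-1} = (y,x)$ does the rest. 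It is worth noting that the paper itself records essentially your argument as a closing Remark after Corollary \eqref{rmrm}, using $\mathcal{F}[s,t]\rtimes\langle\sigma\rangle$ in place of $(I\times I)\rtimes\mathbb{Z}_2$ and killing elements of $I$ one at a time rather than all at once; your variant is marginally cleaner since the target morphism is just a projection. What the paper's longer main proof buys is the tie-in to the essential-extension machinery of Section \eqref{mm} (extensional $\Rightarrow$ no proper essential extension $\Rightarrow$ complete), which is the thematic point of the paper; what yours buys is brevity and independence from all prior structural results.
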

\begin{proof}
On the contrary, assume that $I$ is an extensional object and $I \neq \{1\}$. From Theorem \eqref{ma} and Figure \eqref{pp}, it is easy to see that an extensional object is complete. From Figure \eqref{maa}, it is not hard to see that $H = I \times I$ is a nontrivial extensional object, so $H$ is complete. Since the automorphism $\sigma: H \rightarrow H$ given by $(a, b) \mapsto (b, a)$ is not inner, so $\mathrm{Out}(H) \neq \{1\}$, a contradiction. Thus, $I = \{1\}$.
\end{proof}

\begin{equation}\label{pp}
\begin{tikzcd}
& I \\
I && G
\arrow["{\text{identity}}", from=2-1, to=1-2]
\arrow["f", from=2-1, to=2-3]
\arrow["{\text{lift}}"', from=2-3, to=1-2]
\end{tikzcd}
\end{equation}

\begin{equation}\label{maa}
\begin{tikzcd}
H &&& I \\
&&&&&& H \\
&&&&& N && G \\
N &&& G
\arrow["{\pi_{1}}", shift left, from=1-1, to=1-4]
\arrow["{\pi_{2}}"', shift right, from=1-1, to=1-4]
\arrow["\varphi", from=3-6, to=2-7]
\arrow["f"', from=3-6, to=3-8]
\arrow["{(h_{1}, h_{2})}"', from=3-8, to=2-7]
\arrow["\varphi", from=4-1, to=1-1]
\arrow["{\pi_{2}\varphi}", shift left, from=4-1, to=1-4]
\arrow["{\pi_{1}\varphi}"', shift right, from=4-1, to=1-4]
\arrow["f"', from=4-1, to=4-4]
\arrow["{h_{1}}"', shift right, from=4-4, to=1-4]
\arrow["{h_{2}}", shift left, from=4-4, to=1-4]
\end{tikzcd}
\end{equation}

\begin{corollary}\label{rmrm}
The only injective object in the category of groups is the trivial group.
\end{corollary}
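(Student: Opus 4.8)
The plan is to obtain this as a formal consequence of Theorem \eqref{ll}, resting on the single observation that every injective object is extensional. The first step I would take is to compare the two defining lifting properties. An injective object $\mathcal{I}$ is required to admit, for \emph{every} monomorphism $K \to L$ and every morphism $K \to \mathcal{I}$, a morphism $L \to \mathcal{I}$ making the triangle commute. An extensional object $I$, by contrast, is required to admit such a lift only for those monomorphisms $f : N \to G$ whose image $f(N)$ is normal in $G$. Since the normal monomorphisms form a subclass of all monomorphisms, the injectivity hypothesis supplies in particular a lift along every normal monomorphism; hence any injective object is automatically an extensional object. This is precisely the inclusion already flagged in the discussion preceding Theorem \eqref{ll}.

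With that inclusion established, the second step is immediate: if $\mathcal{I}$ is an injective object in the category of groups, then it is an extensional object, and Theorem \eqref{ll} forces $\mathcal{I} = \{1\}$. To close the loop I would then check the converse direction, namely that the trivial group is in fact injective: the unique morphism into $\{1\}$ always serves as the required lift, so $\{1\}$ vacuously satisfies the injective lifting property. Combining these, the trivial group is the unique injective object in the category of groups.

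The only point that genuinely requires care — and it is conceptual rather than computational — is to verify that the implication runs in the correct direction, namely injective $\Rightarrow$ extensional and not the reverse. Because extensionality \emph{weakens} the lifting requirement (fewer monomorphisms are demanded to be liftable), it is the weaker property, so the class of injective objects sits \emph{inside} the class of extensional objects; this containment is exactly what allows the strong conclusion of Theorem \eqref{ll} to descend to injective objects. Beyond Theorem \eqref{ll} itself, in which all the substantive work has already been carried out, no real obstacle remains, and the corollary follows in a single line.
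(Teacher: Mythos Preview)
Your proposal is correct and follows essentially the same approach as the paper: the paper's proof is the single line ``Follows from Theorem \eqref{ll},'' relying on the observation (made in the discussion preceding Theorem \eqref{ll}) that every injective object is an extensional object. Your added verification that the trivial group is indeed injective and your care about the direction of the implication are welcome clarifications, but the substance matches the paper exactly.
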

\begin{proof}
Follows from Theorem \eqref{ll}.
\end{proof}

\begin{remark}
Another short proof of Theorem \eqref{ll} can be the following: Let $\mathcal{F}[s, t]$ be the free group on two letters and $\sigma \in \mathrm{Aut}(\mathcal{F}[s, t])$ be such that $\sigma(s) = t$ and $\sigma(t) = s$. Consider the semi-direct product $\mathcal{F}[s, t] \rtimes \langle \sigma \rangle$, where $\langle \sigma \rangle$ acts on $\mathcal{F}[s, t]$ in an obvious way, and the monomorphism $f: \mathcal{F}[s, t] \rightarrow \mathcal{F}[s, t] \rtimes \langle \sigma \rangle$, defined by, $f(g) = g$ for all $g \in \mathcal{F}[s, t]$. Let $x \in I$ and $\varphi: \mathcal{F}[s, t] \rightarrow I$ is the morphism defined by $\varphi(s) = 1$ and $\varphi(t) = x$. Hence, a morphism $h: \mathcal{F}[s, t] \rtimes \langle \sigma \rangle \rightarrow I$ exists such that $h \circ f = \varphi$. As $\sigma t \sigma^{-1} = \sigma t \sigma = s$, therefore, $h(\sigma) (h \circ f)(t) h(\sigma) = (h \circ f)(s)$. This implies $h(\sigma) x h(\sigma) = 1$, and so $x = 1$. As $x \in I$ was arbitrary, $I$ is the trivial group.
\end{remark}

\section*{\large{Acknowledgement}}

The first author would like to express his deepest gratitude to the late Tapati Chowdhury and Neem Karoli Baba.

\bibliographystyle{amsplain}

\end{document}